\newtheorem{thm}{Theorem}[section]
\newtheorem*{thm*}{Theorem}
\newtheorem{prop}[thm]{Proposition}
\newtheorem{cor}[thm]{Corollary}
\newtheorem{lemma}[thm]{Lemma}
\newtheorem{obs}[thm]{Observation}
\newtheorem{defn}[thm]{Definition}
\newtheorem*{defn*}{Definition}
\newtheorem{conj}[thm]{Conjecture}
\newtheorem*{conj*}{Conjecture}
\theoremstyle{definition}
\newcommand{\AAA}{\mathcal{A}}
\newcommand{\BBB}{\mathcal{B}}
\newcommand{\CCC}{\mathcal{C}}
\newcommand{\DDD}{\mathcal{D}}
\newcommand{\FFF}{\mathcal{F}}
\newcommand{\HHH}{\mathcal{H}}
\newcommand{\LLL}{\mathcal{L}}
\newcommand{\PPP}{\mathcal{P}}
\newcommand{\RRR}{\mathcal{R}}
\newcommand{\SSS}{\mathcal{S}}
\newcommand{\TTT}{\mathcal{T}}
\newcommand{\Free}{\operatorname{Free}}
\newcommand{\cw}{\mathop{cw}}
\newcommand{\tw}{\mathop{tw}}
\newtheorem{example}[thm]{Example}
\newtheorem*{example*}{Example}
\newtheorem*{comment*}{Comment}
\tikzstyle{vertex}=[circle, draw, fill=black,
\tikzstyle{every node}=[circle, draw, fill=black,
\newcommand{\mnnodearray}[2]{ 
\foreach \i in {1,...,#1}
	\foreach \j in {1,...,#2}
		\node at (\i,\j) {};
}
\title{\texorpdfstring{$t$}{t{}}-sails and sparse hereditary classes of unbounded tree-width}
\author{D. Cocks\footnote{Supported by the Engineering and Physical Sciences Research Council [EP/V520147/1].}\\
\small School of Mathematics and Statistics\\
\small The Open University, UK}
\begin{document}
\maketitle

\begin{abstract}
It has long been known that the following basic objects are obstructions to bounded tree-width: for arbitrarily large $t$, $(1)$ the complete graph $K_t$, $(2)$ the complete bipartite graph $K_{t,t}$, $(3)$ a subdivision of the $(t \times t)$-wall and $(4)$ the line graph of a subdivision of the $(t \times t)$-wall. We  now add a further \emph{boundary object} to this list, a \emph{$t$-sail}. 

These results have been obtained by studying sparse hereditary  \emph{path-star} graph classes, each of which consists of the finite induced subgraphs of a single infinite graph whose edges can be partitioned into a path (or forest of paths) with a forest of stars, characterised by an infinite word over a possibly infinite alphabet. We show that a path-star class whose infinite graph has an unbounded number of stars, each of which  connects an unbounded number of times to the path, has unbounded tree-width. In addition, we show that such a class is not a subclass of the hereditary class of circle graphs.

We identify a collection of \emph{nested} words with a recursive structure that exhibit interesting characteristics when used to define a path-star graph class.  These  graph classes do not contain any of the four basic obstructions but instead contain graphs that have large tree-width if and only if they contain arbitrarily large $t$-sails. We show that these classes are infinitely defined and, like classes of bounded degree or classes excluding a fixed minor, do not contain a minimal class of unbounded tree-width.   
\end{abstract}

%
%
%
%
%
%
\section{Introduction} \label{Sect:Intro}

Tree-width is a graph parameter that became of great interest in the fields of structural and  algorithmic graph theory following the series of papers published by Robertson and Seymour on graph minors (for example \cite{robertson:graph-minors-ii:}). In particular, their Grid Minor Theorem \cite{robertson:graph-minors-v:} states that every graph of large enough tree-width  must contain a minor isomorphic to a large grid (or equivalently, a large wall). Consequently, tree-width has been primarily associated with minor-closed graph classes. However, recent interest in tree-width has focussed on hereditary graph classes, sparked by a paper by Aboulker, Adler, Kim, Sintiari and Trotignon containing the result:

\begin{thm}[Induced Grid Theorem for Minor-Free Graphs \cite{aboulker:tree-width-even-hole-free:}]\label{thm_minor_free} 
For every graph $H$ there is a function $f_H:\mathbb{N} \rightarrow \mathbb{N}$ such that every H-minor-free graph of tree-width at least $f_H(t)$ contains an induced subgraph isomorphic to a subdivision of a $(t \times t)$-wall or the line graph of a subdivision of a $(t \times t)$-wall.
\end{thm}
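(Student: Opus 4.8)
The plan is to deduce the theorem from the Grid Minor Theorem of Robertson and Seymour by a ``cleaning'' argument that exploits the sparsity of $H$-minor-free graphs. It suffices to treat $H = K_s$, since an $H$-minor-free graph is $K_s$-minor-free for $s = |V(H)|$; so assume $G$ has no $K_s$ minor and $\tw(G) \geq f_H(t)$, where $f_H(t) := w(g(s,t))$ for the grid-minor bound $w(\cdot)$ and a function $g$ to be fixed later. By the Grid Minor Theorem $G$ contains a large grid, hence a large wall, as a minor; since walls are subcubic, a wall minor is in fact a topological minor, so $G$ contains a subdivision $W$ of the $(g(s,t) \times g(s,t))$-wall as a subgraph. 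The theorem then reduces to the claim that every $K_s$-minor-free graph containing a sufficiently large wall subdivision as a subgraph contains either a subdivision of the $(t \times t)$-wall or the line graph of such a subdivision as an \emph{induced} subgraph.

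To prove this claim I would first arrange that the branch paths of $W$ are chordless: choosing $W$ with the fewest vertices among all subdivisions of the $(g(s,t) \times g(s,t))$-wall in $G$, any chord of $G$ inside a branch path could be used to shortcut that path, contradicting minimality. What can still spoil $W$ from being induced are (i) ``jumps'', i.e. edges of $G$ between two distinct branch paths, and (ii) vertices of $G \setminus V(W)$ having neighbours on $W$. The crux is to show that, because $K_s$ is excluded, these obstructions are locally confined, so that on a large sub-wall they either vanish --- producing an induced wall subdivision --- or recur in the rigid pattern exhibited by the line graph of a wall subdivision, in which every branch vertex of the underlying wall is blown up into a triangle woven into the grid.

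For the confinement I would invoke the Flat Wall Theorem in the form valid for $K_s$-minor-free graphs, where only a bounded apex set is needed: there is a constant $a = a(s)$ and a set $A \subseteq V(G)$ with $|A| \leq a$ such that some sub-wall $W'$ of $W$, still far larger than $t$, is flat in $G - A$, meaning the part of $G - A$ drawn inside each brick of $W'$ attaches to that brick in a near-planar way. Deleting $A$ costs only a bounded amount of tree-width and a bounded number of rows and columns, so a large flat wall survives, and since we only seek an induced substructure we may work inside $G - A$. Flatness confines the jumps of type (i) and the attachments of type (ii) to single bricks; a Ramsey argument over the bricks of $W'$ then extracts a $(t \times t)$ sub-wall all of whose bricks have the same attachment type among boundedly many. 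The generic type with no attachments yields an induced wall subdivision after one more round of local shortcutting, while the degenerate types --- those in which an attachment cannot be routed away --- are precisely the ones realised inside the line graph of a wall subdivision, giving the second outcome.

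The main obstacle is this last case analysis inside a flat wall: simultaneously controlling the jumps between branch paths and the off-wall attachments, and proving that the line-graph configuration is the only unavoidable one, ruling out ``partially thickened'' intermediate walls by showing each can be either cleaned into a genuine subdivision or recognised as containing the line graph of a smaller wall subdivision. It is here that planarity (flatness), and not merely sparsity, is essential, since it prevents distinct bricks from interacting in uncontrolled ways. Once this structural dichotomy is in place, fixing $g(s,t)$ large enough that the apex, flat-wall and Ramsey losses still leave a $(t \times t)$ sub-wall is routine bookkeeping.
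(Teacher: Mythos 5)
This theorem is cited by the paper from Aboulker, Adler, Kim, Sintiari and Trotignon \cite{aboulker:tree-width-even-hole-free:} and is not proved in the paper, so there is no in-paper argument to compare your sketch against. On its own terms, your outline follows the route one would expect --- Grid Minor Theorem to extract a wall subdivision as a subgraph, then a structural cleaning argument guided by the Flat Wall Theorem for $K_s$-minor-free graphs. The preliminary steps are sound: reducing to $H = K_s$; observing that a subcubic minor is a topological minor so the wall appears as a subdivision; minimising $|V(W)|$ to eliminate chords within branch paths; and deleting the bounded apex set $A$ at bounded tree-width cost while noting that an induced subgraph of $G - A$ is also induced in $G$. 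However, you explicitly defer the entire substance of the argument to a case analysis you describe but do not carry out: showing that a flat wall with locally confined jumps and attachments either cleans to an induced wall subdivision or realises the line graph of a wall subdivision, and ruling out ``partially thickened'' intermediate configurations. That dichotomy is not routine bookkeeping --- it is where the theorem actually lives --- and as written your sketch asserts the conclusion rather than establishing it. For the actual proof one must consult the cited reference.
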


This was followed by a result from Korhonen:

\begin{thm} [Corollary to The Grid Induced Minor Theorem \cite{korhonen:bounded_degree:}]\label{thm_degree_bounded}
For every $\Delta \in \mathbb{N}$ there is a function $f_{\Delta}:\mathbb{N} \rightarrow \mathbb{N}$ such that every graph with degree at most $\Delta$ and tree-width at least $f_{\Delta}(t)$ contains an induced subgraph isomorphic to a subdivision of a $(t \times t)$-wall or the line graph of a subdivision of a $(t \times t)$-wall.
\end{thm}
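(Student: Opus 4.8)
The plan is to derive the statement from Korhonen's \emph{Grid Induced Minor Theorem} --- that for every $d$ there is a function $g_d:\mathbb{N}\to\mathbb{N}$ so that every graph of maximum degree at most $d$ and tree-width at least $g_d(k)$ contains the $(k\times k)$-grid as an induced minor --- together with a purely combinatorial \emph{conversion lemma}: in a graph of bounded degree, a sufficiently large grid induced minor already forces an induced subdivision of a wall, or an induced line graph of a subdivision of a wall. One then sets $f_\Delta(t):=g_\Delta(k)$, where $k=k(\Delta,t)$ is the (large) grid size demanded by the conversion lemma, and the whole proof reduces to that lemma.

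For the conversion lemma, fix branch sets $\{B_v : v\in V(\Gamma_k)\}$ witnessing the $(k\times k)$-grid $\Gamma_k$ as an induced minor of $G$: pairwise disjoint connected vertex sets with an edge of $G$ between $B_u$ and $B_v$ precisely when $uv\in E(\Gamma_k)$. In each $B_v$ pick a spanning tree $T_v$; for each grid edge $uv$ pick one edge $e_{uv}$ of $G$ between $B_u$ and $B_v$, with endpoint $p_v^u\in B_v$ (the ``port'' of $B_v$ towards $u$); and let $S_v$ be the minimal subtree of $T_v$ containing the at most four ports $\{p_v^u : u\sim v\}$. Then $S_v$ is a tree with at most four leaves, hence a path, a spider with a single centre of degree $3$ or $4$, or an ``$H$'' with two degree-$3$ centres; and $\bigcup_v S_v$ together with the edges $e_{uv}$ is a subgraph of $G$ that is topologically a subdivided $\Gamma_k$. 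The obstruction to its being an \emph{induced} subgraph is a collection of ``bad'' edges of $G$: chords inside some $S_v$, and extra edges between $S_u$ and $S_v$ for grid-adjacent $u\sim v$. Crucially, non-adjacent branch sets carry no edges at all, so every bad edge lives inside a single grid-cell neighbourhood.

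The heart of the argument is cleaning up these bad edges while simultaneously cutting the grid down to a wall --- and this is the step I expect to be the main obstacle, not least because a wall is \emph{not} an induced minor of a grid, so the wall structure has to be created, not inherited, using all the freedom in the choice of branch sets, spanning trees and ports. Here the bounded-degree hypothesis does the work: every vertex meets at most $\Delta$ bad edges, so in any region of the grid the bad edges are sparse, and a Ramsey/pigeonhole argument over the many disjoint cells of $\Gamma_k$ lets one pass to a sub-grid $\Gamma_{k'}$ on which the local picture around every port is the same. Two outcomes are then possible. Either every unwanted connection (the grid edges one must suppress to turn a grid into a wall, and the chord/parallel edges) runs through an internal, non-port, non-centre vertex of some $S_v$: deleting all such vertices destroys the bad edges without disconnecting anything, leaving an induced subdivision of $\Gamma_{k'}$, from which deleting further path-internal vertices yields an induced subdivision of the $(t\times t)$-wall. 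Or there is no such slack: the ports towards several neighbours coincide and collapse into small cliques (of size bounded by $\Delta$) at the junctions, and these cliques, glued along the grid-to-wall pattern, assemble exactly into a subdivision of the line graph of the $(t\times t)$-wall.

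Making this dichotomy precise is the technical core: one must verify that the ``dense'' outcome is genuinely the line graph of a \emph{subdivided} wall and nothing more complicated (which is where bounded degree is indispensable, since it forbids large cliques at the junctions), and one must keep the bookkeeping of ports, centres and subdivided paths consistent through the pigeonhole reduction and the grid-to-wall step. Everything else --- the invocation of Korhonen's theorem, the extraction of the tree-model, and the counting that fixes $k(\Delta,t)$ --- is routine.
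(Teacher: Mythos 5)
This theorem is cited in the paper, not proved: it is attributed to Korhonen as a corollary of his Grid Induced Minor Theorem and treated as a black box, so there is no proof of the paper's own to compare against. Your two-step architecture --- extract a large grid induced minor via Korhonen's main result, then convert it into an induced subdivision of a wall or the line graph of one --- is the right one and matches how the corollary is in fact obtained.

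The difficulty is that your ``conversion lemma'' is itself a substantial theorem, due to Aboulker, Adler, Kim, Sintiari and Trotignon, proved under the hypotheses ``no $K_t$ subgraph and no $K_{t,t}$ subgraph''; both follow from bounded degree, since a graph of maximum degree $\Delta$ contains neither $K_{\Delta+2}$ nor $K_{\Delta+1,\Delta+1}$. The clean write-up simply records that observation, applies Korhonen to get a huge grid induced minor, and invokes that existing lemma rather than reproving it. Your sketch of a direct proof leaves real gaps at precisely the points you flag as ``the technical core'': the Ramsey/pigeonhole regularization over grid cells is only asserted, with no control of the blow-up $k(\Delta,t)$; the dichotomy ``either delete non-port internal vertices and obtain a wall subdivision, or ports collapse into cliques that assemble into a line graph of a wall'' is stated as intent rather than argued --- in particular you do not show that the dense outcome is \emph{precisely} the line graph of a subdivided wall rather than a patchwork of the two outcomes or a structure with larger junction cliques; and the grid-to-wall step tacitly assumes the subdivision paths are long enough that one may delete internal vertices, which can fail, and fails exactly in the regime where the line-graph alternative is the one that ought to appear. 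As written, the core lemma is missing, and that is where all the work of the known proof lives.
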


Thus, a hereditary graph class with an excluded minor or of bounded vertex degree has unbounded tree-width if and only if it contains arbitrarily large subdivisions of a wall or the line graph of a subdivision of a wall.

It has long been known that the following \emph{$t$-basic obstructions} are obstructions to bounded tree-width: for arbitrarily large $t$, $(1)$ the complete graph $K_t$, $(2)$ the complete bipartite graph $K_{t,t}$, $(3)$ a subdivision of the $(t \times t)$-wall and $(4)$ the line graph of a subdivision of the $(t \times t)$-wall. It is tempting to conclude that these four objects are the only obstructions to bounded tree-width in sparse hereditary classes. However, counterexamples have recently been found by Sintiari and Trotignon \cite{sintiari:ttf-free:}, Davies \cite{davies:oberwolfach2022:} and Bonamy, Bonnet, D\'{e}pr\'{e}s, Esperet, Geniet, Hilaire, Thomass\'{e} and Wesolek \cite{Bonamy:sparse_graphs_log_treewidth:}. Tree-width in hereditary graph classes is currently a very active area of research. In particular, Abrishami, Alecu, Chudnovsky, Dibek, Hajebi, Rz\c{a}\.{z}ewski,  Spirkl and Vu\v{s}kovi\'{c} have contributed to a series of papers on the topic of induced subgraphs and tree decompositions, e.g. \cite{abrishami:induced_subgraphs_VIII:,abrishami:induced_subgraphs_VII:}.

A hereditary graph class is \emph{$KKW$-free} if there exists $t \in \mathbb{N}$ such that the class does not contain the complete graph $K_t$, the complete bipartite graph $K_{t,t}$,  a subdivision of the $(t\times t)$-wall or the line graph of a subdivision of the $(t\times t)$-wall. Our interest is in $KKW$-free classes that nevertheless have unbounded tree-width.

The least number of forests that can cover the edges of a graph is called its \emph{arboricity}. A graph with arboricity bounded by $k \in \mathbb{N}$ is called \emph{$k$-uniformly sparse} or just \emph{sparse}. A graph class is $k$-uniformly sparse if it does not contain a graph of arboricity greater than $k$. Uniformly sparse graph classes are a larger family than either  excluded minor or  bounded degree classes. 

Using Ramsey theory \cite[Proposition $9.4.1$]{diestel:graph-theory5:}  we know that for every $r \in \mathbb{N}$ there is an $n \in \mathbb{N}$ such that every connected graph of order at least $n$ contains a clique, a star or a path of order $r$ as an induced subgraph. Thus, large trees contain long induced paths or big induced stars, and this suggests that 'path-path' (two forests of paths), 'path-star'(a forest of paths and a forest of stars) and 'star-star' (two forests of stars) are the three natural structures to look for in graph classes of arboricity two. Given that walls are 'path-path' graphs we seek further obstructions to bounded tree-width in another family of classes of arboricity two, namely 'path-star' classes. 

\begin{defn}\label{def_path_star_graph}
A finite graph $(V,E)$ is a \emph{path-star} graph if its edges $E$ can be partitioned into two sets, $E_P$ and $E_S$, so that 
\begin{enumerate}
\item $(V,E_P)$ is a forest of paths, and
\item $(V,E_S)$ is a forest of stars.
\end{enumerate} 
\end{defn}

In this paper we consider a particular type of hereditary \emph{path-star class} being the collection of all the finite induced subgraphs of a single infinite graph whose edges can be partitioned into a path (or forest of paths) together with a forest of stars, where the leaves of the stars (but not the internal vertices of the stars) may embed in the paths. We denote a path-star class $\RRR^{\alpha}$ where $\alpha$ is an infinite word over alphabet $\mathbb{N}$ (see formal Definition \ref{def_path_star_class}). 
Our study of path-star graph classes has led to the discovery of a family of objects, \emph{$t$-sails}, with tree-width at least $t-1$ (Lemma \ref{lem_sail}).

\begin{defn}\label{def_tsail}
A path-star graph $(V,E_P \cup E_S)$ is a $t$-sail if there exists a set $V_S=\{s_1, \dots, s_t\}\subseteq V$ so that 
\begin{enumerate} 
\item No edge in $E_P$ is incident with a vertex in $V_S$,
\item The graph $(V \setminus  V_S,E_P)$ comprises $t$ components $P_1, \dots, P_t$ (all paths), and
\item  For all $1 \le i \le j \le t$ there exists $v \in P_i$ such that $s_jv \in E_S$.
\end{enumerate} 
\end{defn}
An example is shown in Figure \ref{fig:sail}.

\begin{figure}\centering

\begin{tikzpicture}[scale=1,
	vertex2/.style={circle,draw,minimum size=9,fill=blue},]

	\foreach \j in {1,...,7}
	\foreach \i in {1,...,\j}
		\node[red] at (\i,8-\j) {};
		
	\foreach \i in {2,...,8} \node[blue] at (0,\i) {};
	
	\foreach \j in {1,...,6}
	\foreach \i in {1,...,\j}	
		\draw (\i,7-\j) -- (\i+1,7-\j);
	
	\draw(0,8) edge[out=-30,in=120] (2,6);
	\draw(0,8) edge[out=-30,in=120] (3,5);
	\draw(0,8) edge[out=-30,in=120] (4,4);
	\draw(0,8) edge[out=-30,in=120] (5,3);
	\draw(0,8) edge[out=-30,in=120] (6,2);
	\draw(0,8) edge[out=-30,in=120] (7,1);
	\draw(0,7) edge[out=-30,in=120] (2,5);
	\draw(0,7) edge[out=-30,in=120] (3,4);	
	\draw(0,7) edge[out=-30,in=120] (4,3);
	\draw(0,7) edge[out=-30,in=120] (5,2);
	\draw(0,7) edge[out=-30,in=120] (6,1);
	\draw(0,6) edge[out=-30,in=120] (2,4);
	\draw(0,6) edge[out=-30,in=120] (3,3);
	\draw(0,6) edge[out=-30,in=120] (4,2);
	\draw(0,6) edge[out=-30,in=120] (5,1);
	\draw(0,5) edge[out=-30,in=120] (2,3);
	\draw(0,5) edge[out=-30,in=120] (3,2);
	\draw(0,5) edge[out=-30,in=120] (4,1);
	\draw(0,4) edge[out=-30,in=120] (2,2);
	\draw(0,4) edge[out=-30,in=120] (3,1);
	\draw(0,3) edge[out=-30,in=120] (2,1);
	\draw(0,8)--(1,7);\draw(0,7)--(1,6);\draw(0,6)--(1,5);
	\draw(0,5)--(1,4);\draw(0,4)--(1,3);\draw(0,3)--(1,2);
	\draw(0,2)--(1,1);

\end{tikzpicture}

\caption{A simple $7$-sail}
	\label{fig:sail}

\end{figure}
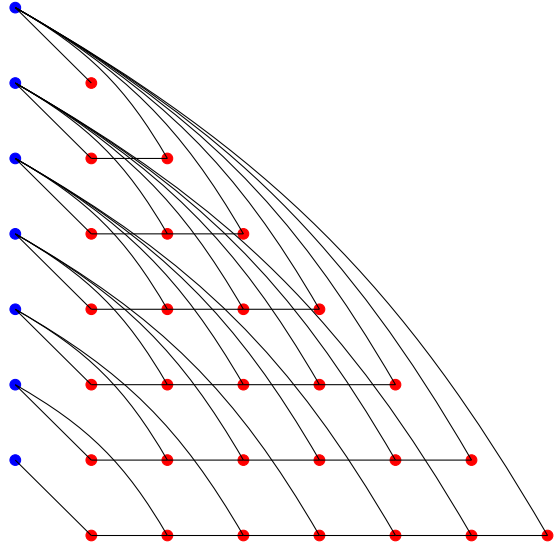

In fact, independently developed, $t$-sails are a generalisation of structures previously observed by Pohoata \cite{pohoata:unavoid:} and Davies \cite{davies:oberwolfach2022:}. In particular, in \cite{davies:oberwolfach2022:} a type of $t$-sail was used to disprove a conjecture that tree-width is always $O(log |V(G)|)$ for a graph $G$ in a hereditary class that is $KKW$-free.

We show that a path-star class, defined by an infinite word over an infinite alphabet where each letter in the alphabet appears an infinite number of times in the word,  contains a $t$-sail for arbitrarily large $t$ and therefore has unbounded tree-width (Theorem \ref{unbd_tw}). To distinguish this work from that recently undertaken on circle graphs by Hickingbotham, Illingworth, Mohar and Wood \cite{hickingbotham:treewidth_circlegraphs:}, we show that no path-star class defined by a word where at least two letters alternate more than four times  is a subclass of circle graphs (Theorem \ref{pathstar_circle}).

We identify a collection of \emph{nested} words (see Section \ref{words}) with a recursive structure that exhibit interesting characteristics when used to define a hereditary path-star  graph class: 

\newtheorem*{KKW_free}{Theorem \ref{KKW_free}}
\begin{KKW_free}
If $\alpha$ is a nested word then the path-star class $\RRR^{\alpha}$ is $KKW$-free.
\end{KKW_free}

\newtheorem*{thm_sail}{Theorem \ref{thm_sail}}
\begin{thm_sail}
If $\alpha$ is a nested word then for every $t \ge 1$ there is a positive integer valued function $f_{\alpha}(t)$ such that  every graph in $\RRR^{\alpha}$ of tree-width at least $f_{\alpha}(t)$ contains a $t$-sail as an induced subgraph. 
\end{thm_sail}

This shows that for a path-star class defined by a nested word, $t$-sails perform a similar role as walls do for minor excluded or bounded degree classes, in that they are the basic object obstructing bounded tree-width, albeit that in this case a $t$-sail is not one fixed graph like a wall but a more general construction.

The concept of \emph{well-quasi-ordering} (for definition see \cite{diestel:graph-theory5:} page 348) is central to the Graph Minor Theorem \cite{robertson:graph-minors-xx:}, that tells us that the finite graphs are well-quasi-ordered by the minor relation. Consequently, any minor-closed graph class, such as planar graphs, is finitely defined under the minor relationship -- that is, such a class can be defined by a finite number of minimal forbidden minors. Unfortunately, the same cannot be said of hereditary graph classes, which are not, in general, well-quasi-ordered.  The list of minimal forbidden induced subgraphs in a hereditary class may be finite or infinite.

Given the importance of finitely defined classes in the Graph Minor Theorem,  much recent research into obstructions to bounded tree-width in hereditary classes has been focussed on these. In particular, a paper from Lozin and Razgon characterizes hereditary classes of unbounded tree-width that are finitely defined \cite{lozin:tw_dichotomy:}. We show that the main result of \cite{lozin:tw_dichotomy:}, reproduced here as Theorem \ref{thm_dichotomy},  has the following consequence:

\newtheorem*{thm_infinite_def_1}{Theorem \ref{thm_infinite_def_1}}
\begin{thm_infinite_def_1}
A hereditary class of graphs of unbounded tree-width that is $KKW$-free  is infinitely defined.
\end{thm_infinite_def_1}

It has previously been shown (reproduced here as Theorem \ref{thm_tw_cw}) that in sparse graph classes tree-width and another parameter, clique-width, are either both bounded or unbounded, so the behaviour of clique-width may shed light on the behaviour of tree-width.

By contrast with sparse hereditary classes, all dense hereditary classes have unbounded tree-width (Theorem \ref{thm_kostochka}), but this is not true for clique-width (Section \ref{Clique_width}), where there are dense classes of both bounded and unbounded clique-width.

A hereditary class of graphs $\CCC$ is \emph{minimal of unbounded tree-width/clique-width} if every proper hereditary subclass $\DDD$ has bounded tree-width/clique-width (if it is clear from the context whether we are referring to tree-width or clique-width we will just call the class \emph{minimal}). In other words,  a hereditary graph class $\CCC$ is minimal if, for any proper hereditary subclass $\DDD$ formed by adding just one more forbidden graph, $\DDD$ has  bounded tree-width/clique-width.

The discovery of the first minimal hereditary classes of unbounded clique-width was made by Lozin~\cite{lozin:minimal-classes:}. However, more recently many more such classes have been identified by Atminas, Brignall, Lozin and Stacho~\cite{abls:minimal-classes-of:}, Collins, Foniok, Korpelainen, Lozin and Zamaraev~\cite{collins:infinitely-many:} and Dawar and Sankaran~\cite{dawar:clique_width:}. Most recently Brignall and Cocks demonstrated an uncountably infinite family of minimal hereditary classes of unbounded clique-width in \cite{brignall_cocks:uncountable:} and created a framework for minimal classes in \cite{brignall:framework_minimal_classes:}. 

 It is therefore natural to ask whether there are any sparse minimal classes of unbounded tree-width (or clique-width).

We show the following:

\newtheorem*{min_w}{Theorem \ref{min_w}}
\begin{min_w}
If $\CCC$ is a hereditary class of graphs of bounded vertex degree or has an excluded minor then it does not contain a minimal class.
\end{min_w}

\newtheorem*{min_nested_2}{Theorem \ref{min_nested_2}}
\begin{min_nested_2}
If $\RRR^{\alpha}$ is a path-star hereditary class of graphs defined by a nested word $\alpha$ then it does not contain a minimal class.
\end{min_nested_2}

This suggests the following conjecture:

\begin{conj}
Sparse hereditary graph classes of unbounded tree-width do not contain a minimal class of unbounded tree-width.
\end{conj}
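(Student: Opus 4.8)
The plan is to establish, for each of the three classes $\RRR\in\{\RRR^{\nu},\RRR^{\kappa(q)},\RRR^{\eta}\}$, the defining property of containing no minimal class. Recall that a hereditary class of unbounded tree-width fails to be minimal precisely when it has a \emph{proper} hereditary subclass of unbounded tree-width, and that every proper hereditary subclass of a class $\MMM$ is contained in $\MMM\cap\Free(G)$ for some $G\in\MMM$, where $\Free(G)$ denotes the class of graphs having no induced subgraph isomorphic to $G$. So it is enough to prove: for every hereditary $\MMM\subseteq\RRR$ of unbounded tree-width there is a graph $G\in\MMM$ for which $\MMM\cap\Free(G)$ still has unbounded tree-width.

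Fix such an $\MMM$. Since $\MMM$ has unbounded tree-width, for every $t$ it contains a graph of tree-width at least $f_{\nu}(t)$ (respectively $f_{\kappa(q)}(t)$, $f_{\eta}(t)$), which by Theorem~\ref{thm_sail} has an induced subdivision of a $t$-sail; as $\MMM$ is hereditary it contains that subdivided sail itself, and by Lemma~\ref{lem_sail} its tree-width is at least $t-1$. Hence the unbounded tree-width of $\MMM$ is always witnessed by subdivisions of sails, and the task reduces to finding $G\in\MMM$ that fails to be an induced subgraph of infinitely many of them. One easy situation is when $\MMM$ contains a graph $G$ that is not an induced subgraph of \emph{any} proper subdivision of a sail --- for example a graph with two adjacent vertices of degree at least $3$, which cannot occur there because in a proper subdivision of a sail the vertices of degree at least $3$ are original vertices of the sail and no two of them are adjacent; then every subdivided sail lies in $\MMM\cap\Free(G)$ and we are done. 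So the substantive case is when every graph of $\MMM$ is, up to isomorphism, an induced subgraph of a subdivided sail.

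In that case the subdivided sails realising the tree-width of $\MMM$ occur as induced subgraphs of the path-star graphs generating $\RRR$, so their shape --- the lengths of the subdivided edges, and the multiplicities with which a star attaches to each path component --- is governed by the defining infinite nested word. The crux is the structural claim that these classes admit no infinite induced-subgraph chain of subdivided sails $S_0\subsetneq S_1\subsetneq\cdots$ with $\tw(S_i)\to\infty$; equivalently, that for every subdivided sail $G$ occurring in $\RRR$ the subdivided sails of $\RRR$ that contain $G$ as an induced subgraph have bounded tree-width. To prove this I would combine a \emph{long-subdivision} observation (a subdivided sail all of whose subdivided edges exceed $|V(G)|$ in length cannot contain $G$ unless $G$ is itself a disjoint union of long-subdivided sail pieces, which pins down the dangerous $G$) with a direct analysis of how subdivisions of sails embed into $\RRR^{\nu}$, $\RRR^{\kappa(q)}$ and $\RRR^{\eta}$: the nested words $\nu$, $\kappa(q)$ and $\eta$ are built to be non-repetitive enough that any occurrence of a fixed finite pattern inside a sail of $\RRR$ is confined to a bounded window of the word, whereas large tree-width forces the sail to span an unbounded window. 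Given the claim, inside a putative minimal $\MMM$ one builds such a chain greedily --- using that $\MMM\cap\Free(S)$ has bounded tree-width for every subdivided sail $S\in\MMM$, together with a refinement of Theorem~\ref{thm_sail} extracting a subdivided sail that contains a prescribed one --- and contradicts the claim.

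I expect the structural claim to be the main obstacle, and within it the careful bookkeeping needed to describe exactly which subdivisions of sails appear as induced subgraphs of these particular path-star graphs (refining the construction behind Theorem~\ref{thm_sail}) and to turn the non-repetitiveness of the words into the ``bounded window'' statement --- uniformly in the parameter $q$ for the family $\RRR^{\kappa(q)}$. Once the claim is in place the deduction of Theorem~\ref{min_nested_2} is short, and the analogous scheme, with subdivided walls and their line graphs (supplied by Theorems~\ref{thm_minor_free} and \ref{thm_degree_bounded}) replacing subdivided sails, underlies Theorem~\ref{min_w}.
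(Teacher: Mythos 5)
The statement you were asked to prove is posed in the paper as an open \emph{conjecture}: the paper itself gives no proof, and the conjecture quantifies over \emph{all} sparse hereditary classes, a strictly larger family than the excluded-minor, bounded-degree and three path-star classes actually treated in Theorems~\ref{min_w} and~\ref{min_nested_2}. Your proposal silently narrows the target to $\RRR^{\nu}$, $\RRR^{\kappa(q)}$, $\RRR^{\eta}$ (with a one-line gesture at the bounded-degree/excluded-minor case), so even if completed it would recover only Theorems~\ref{min_w} and~\ref{min_nested_2}, not the conjecture; there is no argument at all for an arbitrary sparse class, which is the content of the statement.

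Even as an approach to Theorem~\ref{min_nested_2} the sketch diverges from the paper's proof and stalls at the crucial step. The paper's argument is short and uniform: Lemma~\ref{min_sail} shows that any subdivided $t$-sail $T$ in $\RRR^{\nu}$, $\RRR^{\kappa(q)}$ or $\RRR^{\eta}$ with shortest cycle $C_m$ contains, after deleting the first $m$ (resp.\ $q$) star nodes and then alternate remaining star nodes, a subdivided $(\frac{t-m}{2})$-sail with no $C_m$ at all --- a short case analysis of the nested word shows any shortest cycle meets at most two star nodes and is thereby destroyed. Hence, for a putative minimal $\DDD$ with shortest cycle $C_m$, the proper subclass $\DDD\cap\Free(C_m)$ still contains arbitrarily large subdivided sails and so has unbounded tree-width, an immediate contradiction. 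Your proposal instead reduces to the much stronger structural claim that, for any fixed subdivided sail $G$ in $\RRR$, all subdivided sails of $\RRR$ containing $G$ have bounded tree-width; you flag this claim yourself as the main obstacle and do not prove it, and the ``bounded window'' argument you gesture at would need a delicate and so-far-absent analysis of how patterns recur in $\nu$, $\kappa(q)$, $\eta$ (uniformly in $q$). The paper avoids all of this by forbidding a single short cycle rather than a previously-seen sail, exactly as it does for walls in Theorem~\ref{min_w} via Lemma~\ref{lem_subwall}. In short: you attempted a proof of something the paper deliberately leaves open, and for the part the paper does prove you replaced its elementary cycle-forbidding lemma with an unproved and considerably harder claim.
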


%
%
%
%
%
\section{Preliminaries} \label{prelim}

\subsection{Graphs - General}

A graph $G=(V,E)$ is a pair of sets, vertices $V=V(G)$ and edges $E=E(G)\subseteq [V]^2$. Unless otherwise stated, all graphs in this paper are simple, i.e., undirected, without loops or multiple edges. The number of vertices in a graph $G$ is denoted $|G|$ and the number of edges $\|G\|$. The \emph{degree} $d_G(v)=d(v)$ of a vertex $v$ is the number of edges at $v$. 

If vertex $u$ is adjacent to vertex $v$ we write $u \sim v$. We denote $N(v)$ as the neighbourhood of a vertex $v$, that is, the set of vertices adjacent to $v$. A set of vertices is \emph{independent} if no two of its elements are adjacent and is a \emph{clique} if all the vertices are pairwise adjacent. We denote a clique with $r$ vertices as $K_r$ and an independent set of $r$ vertices as $\overline{K_r}$. A graph is \emph{bipartite} if its vertices can be partitioned into two independent sets, $V_1$ and $V_2$, and is \emph{complete bipartite} if, in addition, each vertex of $V_1$ is adjacent to each vertex of $V_2$. We denote this by $K_{r,s}$ where $|V_1|=r$ and $|V_2|=s$. 

A \emph{tree} is a graph in which any two vertices are connected by exactly one path and a \emph{forest} is a collection of disjoint trees. A \emph{star} $S_k$ is the complete bipartite graph $K_{1,k}$: a tree with one internal vertex, which we will refer to as the \emph{star-vertex}, and $k$ \emph{leaves}.

A \emph{$k$-cycle} is a closed path with $k$ vertices. A $k$-cycle  has a \emph{chord} if two of its $k$ vertices are joined by an edge which is not itself part of the cycle.  A \emph{hole} is a chordless cycle of length at least $4$.

An \emph{$(m \times n)$-wall} is a graph whose edges are visually equivalent to the mortar lines of a stretcher-bonded clay brick wall with $m$ rows of bricks each of which is $n$ bricks long. More precisely, we can define the wall $W_{m \times n}=(V,E)$ using a square grid of the usual $(x,y)$ Cartesian coordinates.
\begin{align*}
&V= \{(x,y): 0 \le x \le 2n+1, 0 \le y \le m\}\\
&E_{H}= \{(x,y)(x+1,y): (x,y),(x+1,y) \in V\}\\
&E_V= \{(x,y)(x,y+1): (x,y),(x,y+1) \in V, x+y=0 \text{ }(\text{mod }2)  \}\\
&E= E_{H} \cup E_V. 
\end{align*}
See example of $W_{4 \times 4}$ in Figure \ref{fig:wall1}.

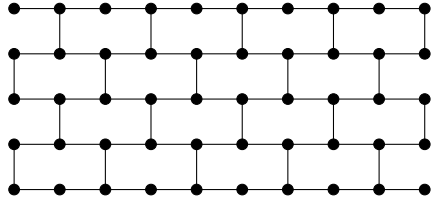
\begin{figure}\centering

\begin{tikzpicture}[scale=0.6,
	vertex2/.style={circle,draw,minimum size=6,fill=black},]

	\foreach \i in {0,...,9}
		\foreach \j in {0,...,4}
			\node at (\i,\j) {};
	
	\draw (0,0)--(9,0);\draw (0,1)--(9,1);\draw (0,2)--(9,2);
	\draw (0,3)--(9,3);\draw (0,4)--(9,4);	
	
	\draw (0,0)--(0,1);\draw (0,2)--(0,3);
	\draw (1,1)--(1,2);\draw (1,3)--(1,4);
	\draw (2,0)--(2,1);\draw (2,2)--(2,3);
	\draw (3,1)--(3,2);\draw (3,3)--(3,4);
	\draw (4,0)--(4,1);\draw (4,2)--(4,3);
	\draw (5,1)--(5,2);\draw (5,3)--(5,4);
	\draw (6,0)--(6,1);\draw (6,2)--(6,3);
	\draw (7,1)--(7,2);\draw (7,3)--(7,4);
	\draw (8,0)--(8,1);\draw (8,2)--(8,3);
	\draw (9,1)--(9,2);\draw (9,3)--(9,4);
	
\end{tikzpicture}
\caption{The $4 \times 4$ wall $W_{4 \times 4}$}
	\label{fig:wall1}

\end{figure}

The \emph{line graph} of a graph $G=(V,E)$ is the graph $L(G)$ with vertex set  $E$ where two vertices in $L(G)$ are adjacent if and only if they are incident as edges in $G$. A graph $H$ is a \emph{subdivision} of a graph $G$ if $H$ can be obtained from $G$ by inserting new (degree $2$) vertices on some of the edges of $G$.

We will use the notation $H \le_I G$ to denote graph $H$ is an \emph{induced subgraph} of graph $G$, meaning $V(H) \subseteq V(G)$ and two vertices of $V(H)$ are adjacent in $H$ if and only if they are adjacent in $G$. We will denote the subgraph of $G=(V,E)$ induced by the set of vertices $U \subseteq V$ by $G[U]$. If a graph $G$ does not contain an induced subgraph isomorphic to $H$ we say that $G$ is \emph{$H$-free}.

An \emph{embedding} of a graph $H$ in a graph $G$ is an injective map $\phi:V(H) \rightarrow V(G)$ such that the subgraph of $G$ induced by the vertices $\phi(V(H))$ is isomorphic to $H$. In other words, $vw \in E(H)$ if and only if $\phi(v) \phi(w) \in E(G)$. If $H$ is an induced subgraph of $G$, then this can be witnessed by one or more embeddings. 

A class of graphs $\CCC$ is \emph{hereditary} if it is closed under taking induced subgraphs, that is $G \in \CCC$ implies $H \in \CCC$ for every induced subgraph $H$ of $G$. It is well known that for any hereditary class $\CCC$ there exists a unique (but not necessarily finite) set of minimal forbidden graphs $\{F_1, F_2, \dots\}$ such that $\CCC= \Free(F_1, F_2, \dots)$ (i.e., every  graph $G \in \CCC$ is $F_i$-free for $i=1,2,\dots$). We will use the notation $\CCC \subseteq \RRR$ to denote that $\CCC$ is a  \emph{hereditary subclass} of a hereditary graph class $\RRR$ ($\CCC \subsetneq \RRR$ for a proper subclass).

%
%
%
%
%
%
\subsection{Sparsity and Density}\label{sparse}

Recall from Section \ref{Sect:Intro} that a \emph{$k$-uniformly sparse} graph is one whose edges can be covered by at most $k$ forests. The following theorems are useful:

\begin{thm} [Nash-Williams, 1964 \cite{nash-williams:decompositions:}]\label{thm_nash}
The edges of a graph $G=(V,E)$ can be covered by at most $k$ forests if and only if $\|G[U] \| \le k(|U|-1)$ for every non-empty set $U \subseteq V$.
\end{thm}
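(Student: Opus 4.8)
The plan is to prove the two directions of the equivalence separately: the forward implication is a short counting argument, and the reverse implication carries all of the content.

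\emph{Necessity.} Suppose $E = F_1 \cup \cdots \cup F_k$ with each $F_i$ a forest, and fix a non-empty $U \subseteq V$. For each $i$ the subgraph $F_i[U]$ is acyclic, hence a forest on at most $|U|$ vertices, so $\|F_i[U]\| \le |U| - 1$. Since every edge of $G[U]$ lies in some $F_i$, we obtain $\|G[U]\| \le \sum_{i=1}^{k} \|F_i[U]\| \le k(|U|-1)$.

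\emph{Sufficiency.} Here I would pass to the graphic matroid $M = M(G)$, whose independent sets are exactly the acyclic edge sets and whose rank function is $r(F) = |V(F)| - c(F)$, where $V(F)$ denotes the set of endpoints of edges of $F$ and $c(F)$ the number of connected components of the graph $(V(F), F)$. Covering $E$ by $k$ forests is equivalent to partitioning $E$ into $k$ independent sets of $M$ (a cover refines to a partition by assigning each edge to one forest containing it, and empty forests may be appended), which is in turn equivalent to $E$ being independent in the $k$-fold matroid union $N = M \vee \cdots \vee M$. By the matroid union theorem of Nash-Williams and Edmonds, $r_N(A) = \min_{F \subseteq A} \bigl( |A \setminus F| + k\, r(F) \bigr)$, so $E$ is independent in $N$ if and only if $|F| \le k\, r(F)$ for every $F \subseteq E$. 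To see that the density hypothesis forces this inequality, fix $F \subseteq E$ and let $C_1, \dots, C_m$ be the vertex sets of the connected components of $(V(F), F)$, so that $r(F) = \sum_{j=1}^{m} (|C_j| - 1)$. The edges of $F$ are partitioned among the induced subgraphs $G[C_1], \dots, G[C_m]$, so applying the hypothesis with $U = C_j$ for each $j$ gives $|F| = \sum_{j=1}^{m} \|F[C_j]\| \le \sum_{j=1}^{m} \|G[C_j]\| \le \sum_{j=1}^{m} k(|C_j| - 1) = k\, r(F)$. Hence $E$ is independent in $N$, and $G$ decomposes into $k$ forests.

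The main obstacle is the matroid union rank formula, the one genuinely non-elementary ingredient; once it is granted, everything above is routine bookkeeping. For a self-contained treatment I would instead induct on $\|G\|$: delete an edge $e = xy$, apply induction to $G - e$ (which inherits the density bound) to obtain forests $F_1, \dots, F_k$ covering $G - e$, and, if $F_i + e$ contains a cycle for every $i$, run an augmenting search from $e$ in the style of the matroid union proof, alternately tracing fundamental cycles in the forests $F_i$. If this search never yields a recolouring that lets $e$ be added, one extracts from the vertices it visits a set $U$ with $\|G[U]\| \ge k(|U|-1) + 1$, contradicting the hypothesis. Organising that search and proving it terminates correctly is the delicate point of that alternative — in effect a direct proof of the relevant instance of matroid union.
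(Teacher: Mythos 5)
The paper does not prove this theorem; it is stated as a cited external result (Nash-Williams 1964), so there is no in-paper argument to compare against.

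Your proof is correct. The necessity direction is the standard counting argument and is airtight: each $F_i[U]$ is a forest on $|U|$ vertices, hence has at most $|U|-1$ edges, and summing over the $k$ forests bounds $\|G[U]\|$. The sufficiency direction via the matroid union theorem is also sound: you correctly identify that $E$ being coverable by $k$ forests is equivalent to $E$ being independent in the $k$-fold union of the graphic matroid, invoke the Nash-Williams/Edmonds rank formula to reduce this to $|F| \le k\,r(F)$ for every $F \subseteq E$, and then verify that inequality by decomposing $(V(F),F)$ into its connected components $C_1,\dots,C_m$, applying the hypothesis with $U=C_j$, and using $r(F)=\sum_j(|C_j|-1)$. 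The bookkeeping is right throughout. This is the now-standard modern derivation of Nash-Williams' arboricity theorem from matroid union, and it is cleaner than the original 1964 argument (which is a direct combinatorial induction roughly along the lines of the alternative you sketch). The only caveat is that the self-contained inductive alternative you outline at the end is genuinely a sketch; turning the augmenting-path search into a complete argument, including the termination and the extraction of a violating set $U$ when the search fails, is where the real work lies, so as written it should not be read as a second proof but as a pointer to one. Since the matroid-union route is fully rigorous as you have presented it, this is not a gap in the submission.
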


\begin{thm} [Kostochka, 1982 \cite{diestel:graph-theory5:}] \label{thm_kostochka}
There exists a constant $c \in \mathbb{R}$ such that, for every $t \in \mathbb{N}$, every graph $G$ of average degree $d(G) \ge ct\sqrt{\log{t}}$ contains $K_t$ as a minor. 
\end{thm}

A \emph{dense} hereditary graph class is one that is not $k$-uniformly sparse for some $k \in \mathbb{N}$, or in other words, by Theorem  \ref{thm_nash}, for any $k$ there is a graph in the class that has average degree greater than $k$. By Theorem  \ref{thm_kostochka}, for any $t \in \mathbb{N}$ we can set $k \ge ct\sqrt{\log{t}}$ so that the class contains a graph with a $K_t$ minor.

%
%
%
%
%
%
\subsection{Tree-width}

Let $G=(V,E)$ be a graph , $T$ a tree, and let $\mathcal{V}=(V_t)_{t\in T}$ be a family of vertex sets $V_t \subseteq V(G)$ (called \emph{bags}) indexed by the vertices $t$ of $T$. The pair $(T,\mathcal{V})$ is called a \emph{tree-decomposition} of $G$ if it satisfies the following three conditions:
\begin{enumerate}
	\item Every vertex of $G$ is in at least one of the bags $V_t$,
	\item If $(u,v) \in E$, then $u$ and $v$ are together in some bag,
	\item for all $v \in V$, the graph induced by the bags containing $v$ is connected in $T$.
\end{enumerate}

The \emph{width} of a tree-decomposition is the maximum bag size minus $1$. The \emph{tree-width} of $G$, denoted by $\tw(G)$,  is the least width of any tree-decomposition of $G$.

\begin{lemma}[\cite{diestel:graph-theory5:}]\label{lem_minor}
If $H$ is a minor of $G$ then $\tw(H) \le \tw(G)$.
\end{lemma}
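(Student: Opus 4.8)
The plan is to use the branch-set (model) characterisation of minors: $H$ is a minor of $G$ if and only if there is a family of pairwise disjoint vertex sets $(B_v)_{v \in V(H)}$ in $G$, called \emph{branch sets}, such that each induced subgraph $G[B_v]$ is connected and, for every edge $uv \in E(H)$, there is at least one edge of $G$ with one endpoint in $B_u$ and the other in $B_v$. Given such a model together with a tree-decomposition $(T,\mathcal{V})$ of $G$ of width $\tw(G)$, I would build a tree-decomposition of $H$ on the \emph{same} tree $T$, taking as the bag at node $t$ the set
\[
W_t = \{\, v \in V(H) : V_t \cap B_v \ne \emptyset \,\}.
\]

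Next I would check the three tree-decomposition axioms for $(T,(W_t)_{t \in T})$. Coverage is immediate: each $B_v$ is non-empty, so a vertex $x \in B_v$ lies in some bag $V_t$, whence $v \in W_t$. For edges, given $uv \in E(H)$ pick an edge $xy$ of $G$ with $x \in B_u$ and $y \in B_v$; by axiom~2 for $(T,\mathcal{V})$ some bag $V_t$ contains both $x$ and $y$, so $u,v \in W_t$. For the connectivity axiom, note that the nodes $t$ with $v \in W_t$ are exactly those with $V_t \cap B_v \ne \emptyset$; since $G[B_v]$ is connected, this node set is connected in $T$, using the standard fact that in any tree-decomposition the set of nodes whose bags meet a connected subgraph forms a subtree (proved by induction on the number of vertices of the subgraph: axiom~3 handles a single vertex, and two subtrees of $T$ that both meet the bag of a common vertex have connected union).

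Finally, the width does not increase: because the branch sets are pairwise disjoint, choosing for each $v \in W_t$ an element of $V_t \cap B_v$ defines an injection $W_t \hookrightarrow V_t$, so $|W_t| \le |V_t|$ for every $t$. Hence the new decomposition has width at most $\tw(G)$, giving $\tw(H) \le \tw(G)$. The only mildly delicate point — the main obstacle — is the helper fact used for the connectivity axiom, namely that the tree nodes whose bags intersect a connected subgraph of $G$ induce a subtree of $T$; everything else is routine bookkeeping. An alternative that sidesteps the model language is to verify instead that none of the three elementary minor operations increases tree-width: deleting a vertex (remove it from every bag) and deleting an edge (keep the decomposition unchanged) are trivial, and contracting an edge $xy$ is handled by replacing $x$ and $y$ by the merged vertex in every bag, the connectivity axiom for the merged vertex holding because the subtrees associated with $x$ and with $y$ share any node whose bag contains the edge $xy$.
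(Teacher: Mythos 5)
The paper does not prove this lemma; it cites it to Diestel's textbook. Your proof is correct and is essentially the standard textbook argument (the same one found in Diestel): replace each bag $V_t$ by $W_t = \{v \in V(H) : V_t \cap B_v \ne \emptyset\}$, verify the three axioms using the branch-set model and the subtree lemma for connected subgraphs, and observe that disjointness of branch sets gives $|W_t| \le |V_t|$. Both your main argument and your alternative via the three elementary minor operations are sound.
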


It is easy to show that $\tw(K_t) = t-1$ for all positive integers $t \ge 2$. Theorems \ref{thm_nash} and \ref{thm_kostochka} tell us that all dense hereditary graph classes contain a graph with a $K_t$ minor for any positive integer $t \ge 2$ and so by Lemma \ref{lem_minor} have unbounded tree-width. 

Similarly, recalling the definition of $t$-sails (see Definition \ref{def_tsail}), we have:

\begin{lemma}\label{lem_sail}
If $G$ is a $t$-sail for positive integer $t \ge 2$  then $\tw(G) \ge t-1$.
\end{lemma}
\begin{proof}
Contracting star-vertex $s_i$ with the vertices of path $P_i$ for $1 \le i \le t$ gives a $K_{t}$-minor, then using Lemma \ref{lem_minor} and the fact that $\tw(K_t) = t-1$ we have $\tw(G) \ge t-1$.  
\end{proof}

It follows that hereditary graph classes containing arbitrarily large $t$-sails have unbounded tree-width. 

%
%
%
%
%
%
\subsection{Clique-width}\label{Clique_width}

\emph{Clique-width} is a graph width parameter introduced by Courcelle, Engelfriet and Rozenberg in the 1990s~\cite{courcelle:handle-rewritin:}. The clique-width of a graph is denoted $\cw(G)$ and is defined as the minimum number of labels needed to construct $G$ by means of four graph operations -- see \cite{courcelle:upper-bounds-to:}.

Bounded tree-width always implies bounded clique-width:

\begin{thm} [Courcelle and Olariu, \cite{courcelle:upper-bounds-to:}] \label{thm_cw}
For every graph $G$, $\cw(G) \le 2^{\tw(G)+1}+1$.
\end{thm}

However, bounded clique-width does not always imply bounded tree-width. For example, the class of complete graphs has bounded clique-width but unbounded tree-width. Combining results from \cite{courcelle:upper-bounds-to:} with results from Gurski and Wanke \cite{gurski-wanke:tree-width-clique-width-Knn:} gives us certain graph classes for which tree-width and clique-width are either both bounded or both unbounded:

\begin{thm} [\cite{courcelle:upper-bounds-to:, gurski-wanke:tree-width-clique-width-Knn:}] \label{thm_tw_cw}
If $\CCC$ is a collection of graphs such that every graph $G \in \CCC$ either
\begin{enumerate}[label=(\roman*)]
	\item has bounded vertex degree of no more than a constant $\Delta$, or 
	\item excludes a fixed graph $H$ as a minor, or
	\item has bounded arboricity,
	\end{enumerate}
then $\CCC$ has unbounded clique-width if and only if it has unbounded tree-width.
\end{thm}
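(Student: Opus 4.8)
The plan is to prove the two implications separately and to note that only one of them uses the sparsity hypotheses (i)--(iii). Throughout write $w=\tw(G)$ and $k=\cw(G)$. The implication ``$\CCC$ of bounded tree-width $\implies$ $\CCC$ of bounded clique-width'' holds for \emph{all} graph classes and is the direction due to Courcelle and Olariu; the reverse implication is plainly false without a sparsity assumption (complete graphs have clique-width $2$ but unbounded tree-width), and recovering it is the real content of the statement.

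For the first implication I would use the standard construction that converts a tree-decomposition into a clique-width expression. Given a tree-decomposition of $G$ of width $w$, first turn it into a \emph{nice} rooted tree-decomposition (leaf, introduce, forget and join nodes). Process it bottom-up, maintaining the invariant that after handling the subtree at a node $t$ one has built the subgraph of $G$ induced by the vertices seen so far, with every vertex currently in the bag $B_t$ carrying its own private label and every already-forgotten vertex carrying one common ``dead'' label; this is legitimate because, by the connectivity axiom, a forgotten vertex has no further neighbours to receive. Introduce nodes add a vertex with a fresh label and connect it by $\eta$-operations to its at most $w$ neighbours in the current bag; forget nodes relabel a vertex to ``dead''; join nodes must reconcile the two labellings coming from the children over their shared bag, and it is precisely this reconciliation (remembering which ``branch-side'' information is relevant for bag vertices) that yields the bound $\cw(G)\le 3\cdot 2^{\,w-1}$. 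Hence bounded tree-width gives bounded clique-width, with no use of sparsity.

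For the reverse implication I would first reduce all three hypotheses to one: \emph{$\CCC$ excludes some fixed complete bipartite graph $K_{n,n}$ as a subgraph}. If every member of $\CCC$ has maximum degree at most $\Delta$, then no member contains $K_{\Delta+1,\Delta+1}$, which is $(\Delta+1)$-regular. If every member is $H$-minor-free for a fixed $H$ on $h$ vertices, then no member contains $K_{h,h}$ as a subgraph, since $K_{h,h}$ has $K_h$ as a minor (contract a perfect matching) and $K_h$ contains $H$; transitivity of ``minor'' gives the contradiction. If every member is $k$-uniformly sparse, then by Nash--Williams the arboricity of $K_{n,n}$ is $\lceil n^2/(2n-1)\rceil$, which exceeds $k$ once $n=2k+1$, and arboricity is monotone under subgraphs, so no member contains $K_{2k+1,2k+1}$. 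In each case there is a fixed $n=n(\CCC)$ with no $K_{n,n}$ appearing as a subgraph in $\CCC$.

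It then remains to establish the key estimate, which is the heart of the matter and the step I expect to be the main obstacle: there is a function $g$ with $\tw(G)\le g(k,n)$ whenever $\cw(G)\le k$ and $G$ has no $K_{n,n}$ subgraph (the theorem of Gurski and Wanke). I would argue from a $k$-expression $\tau$ for $G$ with parse tree $T$: for a node $t$ let $V_t$ be the set of vertices created in the subtree at $t$, partitioned into its at most $k$ label classes at $t$. The crucial observation is that the neighbourhood inside $V_t$ of any vertex $v\notin V_t$ is a \emph{union of entire label classes of $V_t$}, because every edge from $v$ into $V_t$ is created by some $\eta$-operation at a common ancestor, which joins all current carriers of one label to all current carriers of another, and these label classes only coarsen as one moves up $T$. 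Consequently, for each label class $C$ of $V_t$: if $|C|\ge n$ and $C$ is seen by at least $n$ vertices outside $V_t$, then those vertices together with $C$ span a $K_{n,n}$; so for every $C$ either $|C|<n$ or $C$ is seen by fewer than $n$ outside vertices. Taking the union of the small classes that touch the outside together with the (at most $k(n-1)$) outside vertices that see the large classes yields a set of size $O(kn)$ separating $V_t$ from $V(G)\setminus V_t$ in $G$. Assembling these separators along $T$ then gives a tree-decomposition of width $O(kn)$; the book-keeping here --- in particular ensuring the edge-covering axiom for edges incident to large label classes, which forces one also to carry representatives of those classes in nearby bags --- is the delicate part and is where I expect most of the work to lie. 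Combined with the first implication, this gives for any $\CCC$ satisfying (i), (ii) or (iii) that bounded clique-width is equivalent to bounded tree-width, which is the assertion of the theorem.
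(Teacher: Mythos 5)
The paper does not prove Theorem~\ref{thm_tw_cw}; it is stated as a known result and the burden is carried entirely by the two citations (Courcelle--Olariu for the direction ``bounded tree-width $\Rightarrow$ bounded clique-width'', and Gurski--Wanke for the converse under a sparsity assumption). So there is no in-paper proof to compare you against, and your proposal is really a sketch of what those cited papers do.

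With that understood, your outline is sound and does correspond to the standard arguments. The first direction via a nice tree-decomposition (introduce/forget/join, one label per bag vertex plus a ``dead'' label, reconcile labellings at join nodes) is exactly the Courcelle--Olariu/Corneil--Rotics construction, and the bound $\cw \le 3\cdot 2^{\tw-1}$ is the Corneil--Rotics refinement. Your reduction of all three hypotheses to ``no $K_{n,n}$ subgraph'' is correct: bounded degree $\Delta$ excludes $K_{\Delta+1,\Delta+1}$; $H$-minor-freeness with $|H|=h$ excludes $K_{h,h}$ (contract a perfect matching to get $K_h\supseteq H$); and by Nash--Williams the arboricity of $K_{2k+1,2k+1}$ is $k+1$, so $k$-uniform sparsity excludes it as a subgraph. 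The core Gurski--Wanke step --- that in a $k$-expression the neighbourhood in $V_t$ of an outside vertex is a union of label classes of $V_t$, so that each class is either small or seen from outside by few vertices, giving a separator of size $O(kn)$ at every parse-tree node --- is the right idea and your identification of the ``assembling separators into a tree-decomposition'' step as the delicate book-keeping matches where the actual work in Gurski--Wanke lies. In short: you have correctly reproduced the route of the cited proofs; the one thing you should note in your write-up is that this theorem is being invoked in the paper as background, not proved there.
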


%
%
%
%
%
%

\section{Nested Words} \label{Sect:Words}

In this Section we introduce \emph{nested words}, describe their characteristics and provide a range of examples and methods of generation to demonstrate that not only do they exist but are, in fact, quite common. These words are central to the analysis in Sections \ref{Sect:Nested} and \ref{Sect_min} where we show that path-star classes defined by such words  have a number of interesting features: they are $KKW$-free, $t$-sails are the basic objects obstructing bounded tree-width, they are infinitely defined and do not contain a minimal class of unbounded tree-width.

%
%
%
%
\subsection{Symbolic sequences (words)}\label{words}

We refer to a (finite or infinite) sequence of letters chosen from a (finite or infinite) alphabet as a \emph{word}. In this paper we use the natural numbers as letters to create infinite words that are used to define graph classes. We denote by $\alpha_i$ the $i$-th letter of the word $\alpha$. A \emph{factor} of $\alpha$ is a contiguous subword $\alpha_{[i,j]}$ being the sequence of letters from the $i$-th to the $j$-th letter of $\alpha$. The \emph{length} of a word (or factor) is the number of letters the word contains.

Given a word $\alpha$ over an alphabet $\AAA$, and a sub-alphabet $\SSS \subset \AAA$, the \emph{subword} of $\alpha$ restricted to $\SSS$ is the word derived from $\alpha$ by deleting all letters not in $\SSS$ and concatenating the remaining factors in the same order as they appear in $\alpha$. We denote this subword as $\alpha^{\SSS}$.

We define \emph{branched} and \emph{nested} words as follows:

\begin{defn}\label{def_branched}
A word $\alpha$ over alphabet $\AAA=\mathbb{N}$ is \emph{branched} if $\AAA$ can be partitioned into a finite \emph{base set} $\BBB$ and ordered (by the order inherited from $\mathbb{N})$ \emph{branch sets} $\{\HHH_1, \HHH_2, \dots\}$ which may be finite or infinite, such that:
\begin{itemize} 
\item A base letter can appear after any other letter.
\item The first letter in a branch $\HHH_i$ can only appear after a base letter.
\item Any other letter in a branch $\HHH_i$ can only appear  after the letter preceding it in the $\HHH_i$ order.  
\end{itemize}
\end{defn}

A maximal factor of branched word $\alpha$ containing no base letters is called a \emph{branch} of $\alpha$. Thus, a branch in $\alpha$ is preceded by a base letter unless it begins on the first letter of $\alpha$ and is succeeded by a base letter unless it ends on the last letter of $\alpha$. The letters in the branch come from one branch set, say $\HHH_i$, and appear, starting with the first letter in $\HHH_i$, in the defined order.

\begin{defn}\label{def_nested}
Let $\alpha$ be an infinite branched word over an infinite alphabet $\AAA$ with respect to base set $\BBB_{\alpha}$ and branch sets $\{\HHH^{\alpha}_1, \HHH^{\alpha}_2, \dots\}$ with the property that each letter in $\AAA$ appears an infinite number of times in $\alpha$. Then $\alpha$ is \emph{$\mathfrak{b}$-nested} if there exists a fixed positive integer $\mathfrak{b}$ such that  any subset $\SSS \subseteq \AAA$ can be partitioned into a base set $\BBB_{\SSS}$ and ordered branch sets $\{\HHH^{\SSS}_1, \HHH^{\SSS}_2, \dots\}$, such that  
\begin{itemize} 
\item $|\BBB_{\SSS}| \le \mathfrak{b}$, 
\item $\alpha^{\SSS}$ is a branched word with base set $\BBB_{\SSS}$, and
\item the branch sets of $\alpha^{\SSS}$ are (possibly empty) subsets of the branch sets of $\alpha$ (i.e $\HHH^{\SSS}_i \subseteq \HHH^{\alpha}_{i}$ with the same ordering, for each $i$).  
\end{itemize}
\end{defn}

\begin{example} \label{ex_branch_notnest}
Consider the words $\sigma^1$, $\sigma^2$ and $\alpha$ where $\sigma^1$ is all $1$s, $\sigma^2$ is the word 
\[2323432345432345654323456765432345678 \dots\] and $\alpha$ is the word whose odd letters are $\sigma^1$ and even letters are $\sigma^2$, so that: 
 
\[\alpha=\textcolor{blue}{1}\, 2\, \textcolor{blue}{1}\, 3\, \textcolor{blue}{1}\, 2 \, \textcolor{blue}{1}\, 3\, \textcolor{blue}{1} \,4\, \textcolor{blue}{1}\, 3\, \textcolor{blue}{1}\, 2\, \textcolor{blue}{1}\, 3\, \textcolor{blue}{1}\, 4\, \textcolor{blue}{1}\, 5\, \textcolor{blue}{1} \,4\, \textcolor{blue}{1}\, 3\, \textcolor{blue}{1}\, 2\, \textcolor{blue}{1}\, 3\, \textcolor{blue}{1}\, 4\, \textcolor{blue}{1}\, 5\, \textcolor{blue}{1}\, 6 \dots\]

$\alpha$ is branched with $\BBB=\{1\}$ (in blue) and branch sets $\HHH_1=\{2\}$, $\HHH_2=\{3\}$, $\HHH_3=\{4\}$ etc. It is an infinite word with an infinite alphabet $\AAA=\mathbb{N}$ where each letter appears an infinite number of times. However, it is not nested, since letting $\SSS=\AAA \setminus 1$ then $\alpha^{\SSS}=\sigma^2$  is not a branched word since there is no finite base set $\BBB_{\SSS}$ that can partition $\alpha^{\SSS}$ into one-letter branches that are subsets of the branch sets of $\alpha$, which contradicts the third bullet-point in the definition of $\mathfrak{b}$-nested. 
\end{example}

We refer generally to \emph{nested words} when referring to a collection of $\mathfrak{b}$-nested words for some unspecified $\mathfrak{b}$. Some examples of nested words are given in Sections \ref{word_arith} and \ref{word_power}.

\subsection{Examples : Arithmetic nested words}\label{word_arith}

The following nested words have infinite branch sets.

\begin{itemize}
\item {$\alpha(1)$}: One branch set with increasing  branch sizes (base letters shown in blue, gaps in word used to ease parsing): 
\[\alpha_1=\textcolor{blue}{1}2\,\textcolor{blue}{1}23\,\textcolor{blue}{1}234\,\textcolor{blue}{1}2345\,\textcolor{blue}{1}23456\,\textcolor{blue}{1}234567\,\textcolor{blue}{1}2345678\,\textcolor{blue}{1}23456789\,\textcolor{blue}{1}\dots\]
\item {$\alpha(2)$}: Two branch sets (residue classes mod $2$) with increasing  branch sizes: 
\[\alpha_2=\textcolor{blue}{1}\,\textcolor{blue}{2}\,\textcolor{blue}{1}3\,\textcolor{blue}{2}4\,\textcolor{blue}{1}35\,\textcolor{blue}{2}46\,\textcolor{blue}{1}357\,\textcolor{blue}{2}468\,\textcolor{blue}{1}3579\,\textcolor{blue}{2}468\,10\dots\] 
\item {$\alpha(k)$}: $k$ branch sets (residue classes mod $k$) with increasing branch sizes: 
\begin{align*}
\alpha_k=&\textcolor{blue}{1}\,\textcolor{blue}{2}\dots\,\textcolor{blue}{k}\,\,\,\textcolor{blue}{1}\,(k+1)\,\,\textcolor{blue}{2}\,(k+2)\dots\,\textcolor{blue}{k}\,(2k)\,\dots\,\\
&\textcolor{blue}{1}\,(k+1)\,\,(2k+1)\,\,\textcolor{blue}{2}\,\,(k+2)\,\,(2k+2)\dots\,\dots\,\textcolor{blue}{k}\,\,(2k)\,\,(3k)\,\dots 
\end{align*} 
\end{itemize}

\begin{lemma}\label{alpha_nested}
$\alpha(k)$ is a nested word for all $k \in \mathbb{N}$.
\end{lemma}
\begin{proof}
Observe $\alpha(k)$ is branched with $\AAA=\mathbb{N}, \BBB=\{1,2, \dots, k\}, \HHH_1=\{k+1,2k+1,\dots, nk+1, \dots\}, \HHH_2=\{k+2,2k+2,\dots, nk+2, \dots\}, \dots, \HHH_k=\{2k,3k,\dots, nk, \dots\}$.

Let $\SSS$ be any subset of $\mathbb{N}$. Define $\SSS_i=\{n \in \SSS: n\equiv i \mod k\}$ for $1 \le i \le k$, $m_i=\min(\SSS_i)$ for $1 \le i \le k$, $\BBB_{\SSS}= \cup_{i=1}^k m_i$ and $\HHH_{\SSS i}= \SSS_i \setminus m_i$.  Then $\alpha(k)^{\SSS}$ is branched with base $\BBB_{\SSS}$, $|\BBB_{\SSS}| \le k$ and branch sets $\HHH_{\SSS i}$. Hence, $\alpha(k)$ satisfies the conditions of Definition \ref{def_nested}.
\end{proof}

\subsection{Examples: Power nested words}\label{word_power}

The following nested words have an infinite number of single letter branch sets.

\subsubsection{\texorpdfstring{$q$}{q{}}-ary representation}

For natural numbers $q$ and $n$, let $n_q$ be the representation of $n$ in $q$-ary. Let $k$ be the number of trailing zeros of $n_q$ and let $j$ be the first non-zero digit from the right. Alternatively, there exist unique $j,k \in \mathbb{N}$ and $m \in \mathbb{N}_0$ such that $n=jq^k+ mq^{k+1}$ ($1 \le j \le q-1$). 

We define infinite \emph{power} words $\kappa(q)$ such that the $n$-th letter $\kappa(q)_n=i$ where $i=k(q-1)+j$ (e.g. for $q=3$ we have $\kappa(3)_{45}=6$ because $45$ base $3$ is $1200$, so $k=2$, $j=2$ and $i=2(3-1)+2=6$).

This construction gives us:
\begin{align*}
&\kappa(2)=\textcolor{blue}{1}2\textcolor{blue}{1}\,3\,\textcolor{blue}{1}2\textcolor{blue}{1}\,4\,\textcolor{blue}{1}2\textcolor{blue}{1}3\textcolor{blue}{1}2\textcolor{blue}{1}\,5\,\textcolor{blue}{1}2\textcolor{blue}{1}3\textcolor{blue}{1}2\textcolor{blue}{1}4\textcolor{blue}{1}2\textcolor{blue}{1}3\textcolor{blue}{1}2\textcolor{blue}{1}\,6\dots\\
&\kappa(3)=\textcolor{blue}{12}\,3\,\textcolor{blue}{12}\,4\,\textcolor{blue}{12}\,5\,\textcolor{blue}{12}3\textcolor{blue}{12}4\textcolor{blue}{12}\,6\,\textcolor{blue}{12}3\textcolor{blue}{12}4\textcolor{blue}{12}\,7\,\textcolor{blue}{12}3\textcolor{blue}{12}4\textcolor{blue}{12}5\dots\\
&\kappa(4)=\textcolor{blue}{123}\,4\,\textcolor{blue}{123}\,5\,\textcolor{blue}{123}\,6\,\textcolor{blue}{123}\,7\,\textcolor{blue}{123}4\textcolor{blue}{123}5\textcolor{blue}{123}6\textcolor{blue}{123}\,8\,\textcolor{blue}{123}4\dots\\
&\kappa(5)=\textcolor{blue}{1234}\,5\,\textcolor{blue}{1234}\,6\,\textcolor{blue}{1234}\,7\,\textcolor{blue}{1234}\,8\,\textcolor{blue}{1234}\,9\,\textcolor{blue}{1234}5\textcolor{blue}{1234}6\textcolor{blue}{1234}7\dots
\end{align*}
 
(Note that $\kappa(2)$ has previously appeared in the literature in the context of dense graphs \cite{lozin:well-quasi-ordering-does-not:} and in the context of sparse graphs in \cite{Bonamy:sparse_graphs_log_treewidth:}.)  
 
These words can also be generated by a recurrence relation. For example,  $\kappa(2)$ can be generated by the recurrence relation  $\kappa(2)=\lim_{n \rightarrow \infty} \kappa(2)^n$ where $\kappa(2)^1 = 1$ and for $n>1$,  $\kappa(2)^n=\kappa(2)^{n-1}(n)\kappa(2)^{n-1}$. 
The first four iterates are as follows:
\begin{align*}
\kappa(2)^1    = 1 , \kappa(2)^2	= 1\,2\,1 , \kappa(2)^3	= 121\,3\,121 ,\kappa(2)^4	= &1213121\,4\,1213121
\end{align*}	

In general: 

\begin{prop}\label{prop_kappa}
$\kappa(q)$ can be generated by the recurrence relation $\kappa(q)=\lim_{n \rightarrow \infty} \kappa(q)^n$ where $\kappa(q)^1 = 123\dots\, q-1$ and for $n>1$,  
\begin{align*}
\kappa(q)^n=&\kappa(q)^{n-1}((n-1)(q-1)+1)\kappa(q)^{n-1}((n-1)(q-1)+2) \dots\\
&\kappa(q)^{n-1}((n-1)(q-1)+(q-2))\kappa(q)^{n-1}(n(q-1))\kappa(q)^{n-1}.
\end{align*}
\end{prop}
\begin{proof}
Notice that by induction, $|\kappa(q)^1|= q-1$, $|\kappa(q)^2|= q^2-1$ and $|\kappa(q)^n|= q^n-1$ and recall that for the $x$-th letter in $\kappa(q)$, where $x=jq^k+mq^{k+1}$, $\kappa(q)_x=k(q-1)+j$.

Therefore, in $\kappa(q)^n$ the letters in the interval  $[jq^{n-1}+1, (j+1)q^{n-1}-1]$ for $0 \le j \le q-1$ are identical to $\kappa(q)^{n-1}$ and the letter in location $jq^{n-1}$ for $0 \le j \le q-1$ is $(n-1)(q-1)+j$. The recurrence relation follows.
\end{proof}

\begin{lemma}\label{kappa_nested}
$\kappa(q)$ is a nested word for all $q \in \mathbb{N}$.
\end{lemma}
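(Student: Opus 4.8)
The plan is to verify the nestedness condition directly from the combinatorial structure of $\kappa(q)$, using Observation \ref{kappa_obs} as the main tool. Recall what must be shown: for every finite $S \subseteq \mathbb{N}$ with minimum element $m$, and for every $x,y,z \in S$ with $m < x < y < z$, every occurrence of a factor of the form $x \cdots z$ or $z \cdots x$ in $\kappa(q)$ contains an $m$ or a $y$. So fix such $S$, $m$, $x$, $y$, $z$, and suppose $\kappa(q)_{[a,b]}$ is a factor whose first letter is $z$ and whose last letter is $x$ (the case of first letter $x$, last letter $z$ is symmetric, and whether $x$ or $z$ comes first is irrelevant to the argument).

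The key step is to locate the two occurrences of $z$ and $x$ relative to the recursive block structure. Write $z = k_z(q-1) + j_z$ and $y = k_y(q-1)+j_y$ with $1 \le j_z, j_y \le q-1$; since $y < z$ we have $k_y \le k_z$. By Observation \ref{kappa_obs}, every occurrence of the letter $z$ sits at a position $n$ with $n = j_z q^{k_z} + m' q^{k_z+1}$ for some $m' \in \mathbb{N}_0$, and immediately before and after that position there are intervals of length $q^{k_z}-1$ containing, in order, exactly one copy of every letter strictly below $k_z(q-1)+1$ and no letter above $k_z(q-1)$. I would split into two cases according to whether $y > k_z(q-1)$ or $y \le k_z(q-1)$. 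In the first case $k_y = k_z$ and $j_y < j_z$; here the occurrence of $x$ in the factor forces us out of the block of length $q^{k_z}-1$ adjacent to our $z$ (since $x$, being $\le y$ with $k_x \le k_z$, cannot be $z$ but if $x$ lay in that adjacent block... actually $x$ might lie there) — the cleaner route is: between consecutive occurrences of the letter $z$ the word contains, by the recursive construction (Lemma \ref{lem_kappa}-style reasoning), a full copy of the base pattern which includes every smaller letter, in particular $y$; and if the factor does not cross from one occurrence of $z$ to beyond the next, then $z$ is the unique occurrence and $x$ lies in one of the two adjacent length-$(q^{k_z}-1)$ intervals, which contains $y$ whenever $y \le k_z(q-1)$. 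In the second case $y \le k_z(q-1)$, so $y$ appears in each of those adjacent intervals, and since $x \le y \le k_z(q-1)$ as well, the occurrence of $x$ forces the factor to extend into (or past) such an interval, hence to contain a $y$; if instead the factor stays strictly between two consecutive $z$'s it still contains the whole intervening low-letter pattern and hence a $y$.

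The main obstacle I anticipate is bookkeeping: making precise the claim that a factor running from an occurrence of $z$ to an occurrence of $x$ must either pass through the low-letter interval flanking that $z$ (which contains $y$ when $y \le k_z(q-1)$) or else span at least the gap to the next occurrence of $z$ (which by the substitution structure contains every letter of value at most $k_z(q-1)$, in particular $y$, provided $k_y \le k_z$, which always holds). Handling the sub-case $y > k_z(q-1)$ — impossible, since $y < z$ gives $k_y \le k_z$ and if $k_y = k_z$ then $y = k_z(q-1)+j_y < k_z(q-1)+j_z$, so $j_y < j_z \le q-1$ and indeed $y \le k_z(q-1)+(q-2) \le (k_z+1)(q-1)-1$; the only way $y > k_z(q-1)$ is $k_y = k_z$ — requires noting that then $y$ is itself a "block separator" at level $k_z$ and lies in the gap between consecutive occurrences of $z$, which is exactly where Lemma \ref{lem_kappa} places a copy of $\kappa(q)^{k_z}$ containing all letters up to $k_z(q-1)$ together with the separators $k_z(q-1)+1,\dots$ up to $z-1 \ge y$. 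So in every case the intervening factor between two consecutive $z$'s, or the flanking interval of a lone $z$, supplies the required $y$. I would close by remarking that the argument is symmetric in the roles of the two endpoints, completing the verification that $\kappa(q)$ is nested.
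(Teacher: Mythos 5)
There is a fundamental problem here: the statement you are trying to prove is actually false for $q \ge 3$, and the gap in your argument sits exactly at the place where a counterexample lives. Take $q=3$ and $S=\{1,2,3,4\}$, so $m=1$, $x=2$, $y=3$, $z=4$. In $\kappa(3)=1\,2\,3\,1\,2\,4\,1\,2\,5\,\dots$ the positions $5$ and $6$ carry the letters $2$ and $4$, so $\kappa(3)_{[5,6]}=2\,4$ is a factor $x\dots z$ containing neither $m=1$ nor $y=3$. More generally, for any $q\ge 3$ the factor $(q-1)\,(q+1)$ occurs at positions $[2q-1,\,2q]$ (since $2q-1$ has trailing digit $q-1$ and $2q=2\cdot q^1$ gives letter $q+1$), and with $S=\{1,\,q-1,\,q,\,q+1\}$ this factor witnesses that $\kappa(q)$ is not nested. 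The source of the failure is that a level-$0$ letter such as $x=q-1$ can sit \emph{immediately adjacent} in $\kappa(q)$ to a higher-level letter $z$, so the factor from $x$ to $z$ need not traverse any of the flanking $\kappa(q)^{k_z}$-block at all. (This does not arise for $q=2$, because then the only level-$0$ letter is $1=m$, so $x\ge 2$ is already at level $\ge 1$ and lives at even positions, forcing an odd position carrying a $1$ between $x$ and $z$; $\kappa(2)$ really is nested.)

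Your writeup notices the danger — you explicitly flag the sub-case $y > k_z(q-1)$ as the tricky one — but the resolution offered does not work. You argue that in this sub-case $y$ is a ``block separator'' and therefore appears ``in the gap between consecutive occurrences of $z$'', and then conclude ``in every case the intervening factor between two consecutive $z$'s, or the flanking interval of a lone $z$, supplies the required $y$.'' The first alternative is fine when the factor genuinely spans the whole gap, but the second alternative is false: the flanking interval of length $q^{k_z}-1$ is a copy of $\kappa(q)^{k_z}$ and so contains only the letters $1,\dots,k_z(q-1)$, which by assumption does \emph{not} include $y$. And the factor may well stay inside that flanking interval (indeed, when $x$ is adjacent to $z$, the factor contains just those two letters and nothing else). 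This is precisely what the counterexample above realises. The same unhandled case --- $k_1=k_2<k_4$ in the paper's notation, equivalently the situation where the flanking $\kappa(q)^{k_2}$-block is too small to contain $x_1$ --- is present in the paper's own proof (its case (ii) silently assumes $x_1\le k_2(q-1)$, which already forces $k_1<k_2$ and therefore collapses into case (i)), so the paper's argument does not repair yours; the lemma itself needs a revised hypothesis (e.g. restricting to $q=2$) or a revised definition of nestedness before any proof along these lines can go through.
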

\begin{proof}
Observe $\kappa(q)$ is branched with $\AAA=\mathbb{N}, \BBB=\{1,2, \dots, q-1\}, \HHH_1=\{q\}, \HHH_2=\{q+1\}, \dots, \HHH_k=\{q+k-1\}, \dots$.

Let $\SSS=\{x_1,x_2,\dots\}$ be a subset of $\mathbb{N}$,  where $x_1 < x_2 \dots$. Let $n$ be a position in $\kappa(q)$ with the letter $x_1$ and let $j,k,m$ be the unique integers such that $n=jq^k+ mq^{k+1}$ and $x_1= k(q-1)+j$ as described above. Further, let $x_i \ge x_1$ be the highest element of $\SSS$ such that  $x_i < kq$. We claim that $\kappa(q)$ is nested over $\SSS$ with base $\BBB_{\SSS}=\{x_1, x_2, \dots x_i\}$ of maximum size $q-1$ with single letter branch sets $\HHH^{\SSS}_ 1=\{x_{i+1}\}, \dots , \HHH^{\SSS}_t=\{x_{i+t}\}, \dots$.

For any positive integer $t$ the letter immediately preceding or succeeding  an appearance of $x_{i+t}$  in $\kappa(q)^{\SSS}$ is either $x_1$ or $x_i$. Hence, $\kappa(q)^{\SSS}$ is branched with base $\BBB_{\SSS}$, $|\BBB_{\SSS}|\le q-1$ and branch sets $\HHH^{\SSS}_t$, and $\kappa(q)$ satisfies the conditions of Definition \ref{def_nested}.
\end{proof}

\subsubsection{Fibonacci representation}

The well-known Fibonacci sequence of numbers is defined recursively as $F_0=0, F_1=1, F_n=F_{n-1}+ F_{n-2} \text{ for } n \ge 2$ so that
\[F_1=1,F_2=1,F_3=2,F_4=3,F_5=5,F_6=8,F_7=13,F_8=21, \dots\]

The Fibonacci representation of a number is a sequence of $0$s and $1$s, rather like binary, except that a $1$ in position $k$ counting from the right represents $F_{k+1}$ instead of $2^{k-1}$. Note that to avoid having a repeated $1$ in the sequence we start with $F_2$.  So, for example, $101101$ represents $2^5+2^3+2^2+2^0=32+8+4+1=45$ in binary but $F_7+F_5+F_4+F_2=13+5+3+1=22$ as a Fibonacci representation. In general, a Fibonacci representation is not unique. However, Zeckendorf \cite{Zeckendorf:Fibonacci-representation:} showed that every positive integer can be represented uniquely as the sum of non-consecutive Fibonacci numbers, so we will only use the Zeckendorf representation here.

We define the infinite word $\eta$ such that $\eta_n=i$ if the first $1$ in the Fibonacci representation of $n$ (from the right) appears in position $i$ (e.g. $n=45$ which has Fibonacci representation $10010100$ gives $\eta_{45}=3$). Thus:
 
\[\eta=\textcolor{blue}{12}\,3\,\textcolor{blue}{1}\,4\,\textcolor{blue}{12}\,5\,\textcolor{blue}{12}3\textcolor{blue}{1}\,6\,\textcolor{blue}{12}3\textcolor{blue}{1}4\textcolor{blue}{12}\,7\,\textcolor{blue}{12}3\textcolor{blue}{1}4\textcolor{blue}{12}5\textcolor{blue}{12}3\textcolor{blue}{1}\,8\dots\] 

Also observe:

\begin{prop}\label{prop_eta}
$\eta$ can be generated by the recurrence relation $\eta=\lim_{n \rightarrow \infty} \eta^n$ where $\eta^1 = 1$, $\eta^2 = 12$  and for $n>2$,  $\eta^n=\eta^{n-1}(n)\eta^{n-2}$.
\end{prop}
\begin{proof}
Notice that by induction, $|\eta^1|= 1$, $|\eta^2|= 2$ and $|\eta^n|= F_{n+2}-1$.

By definition, the first $F_{n+1}-1$ letters of $\eta^n$ must be $\eta^{n-1}$.  Equally, the first $1$ in the representation of $F_{n+1}$ is in position $n$ so the $F_{n+1}$-th letter of $\eta^n$ is $n$. Also, the letters in the interval  $[F_{n+1}+1, F_{n+2}-1]$ of length $F_{n}-1$ are identical to the letters in the interval $[1, F_{n}-1]$, and the recurrence relation follows.
\end{proof}

\begin{lemma}\label{eta_nested}
$\eta$ is a nested word.
\end{lemma}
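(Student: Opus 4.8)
The plan is to mimic the proofs of Lemma \ref{nu_nested} and Lemma \ref{kappa_nested}, using the structural description of $\eta$ recorded in Observation \ref{eta_obs} rather than working with the Zeckendorf representation directly. Fix a finite set $S=\{x_1<x_2<\dots<x_n\}\subseteq\mathbb{N}$, pick indices $1<t_2<t_3<t_4\le n$, and let $\eta_{[a,b]}=x_{t_2}\dots x_{t_4}$ (the case $x_{t_4}\dots x_{t_2}$ being symmetric, since $\eta$ read between two fixed letters is the same set of letters regardless of orientation). I must show this factor contains $x_1$ or $x_{t_3}$.

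The key step is to argue by the position of the largest letter $x_{t_4}$. By Observation \ref{eta_obs}, writing $x_{t_4}=k$, any incidence of a letter strictly larger than $k+2$ is flanked on both sides by an interval of length $F_{k+1}-1$ containing every letter $\le k$ (in particular $x_1$, since $x_1<x_{t_4}=k$ forces $x_1\le k$, and one checks $x_1$ does occur in that window because all letters up to $k$ appear there) but no letter $\ge k+1$. So the plan splits into cases mirroring Lemma \ref{kappa_nested}: if there is a letter of $S$ exceeding $x_{t_4}$ by enough — equivalently if $x_{t_2}$ (or $x_{t_4}$) sits next to a strictly larger letter in the relevant window — the flanking interval forces $x_1$ into $\eta_{[a,b]}$; and in the remaining case all of $x_{t_2},x_{t_3},x_{t_4}$ lie in a region governed by a common ``level,'' so that the relative order of first occurrences forces $x_{t_3}$ to appear between $x_{t_2}$ and $x_{t_4}$. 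Concretely: either some incidence of $x_{t_2}$ within $\eta_{[a,b]}$ is immediately surrounded by letters larger than $x_{t_2}+2$, in which case the adjacent length-$(F_{x_{t_2}+1}-1)$ window lies inside $\eta_{[a,b]}$ and contains $x_1$; or the factor from $x_{t_2}$ to $x_{t_4}$ avoids such large letters, and then by Lemma \ref{lem_eta} (applied with $t$ chosen so that $x_{t_2},x_{t_4}\ge t+2$) the intervening factor contains a copy of $\eta^{t}$, and for $t$ large enough relative to the ``index'' of $x_{t_3}$ that copy of $\eta^{t}$ contains an incidence of $x_{t_3}$.

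I would organize the write-up so that the trichotomy parallels items (i)–(iii) of the proof of Lemma \ref{kappa_nested} almost verbatim, substituting $F_{k+2}-1$ for $q^{k}-1$ and $F_{k+1}-1$ for $q^{k}-1$ where appropriate, and appealing to Observation \ref{eta_obs} and Lemma \ref{lem_eta} at the two places where Observation \ref{kappa_obs} and Lemma \ref{lem_kappa} were invoked. The one genuine discrepancy from the $\kappa(q)$ case is that the Fibonacci substitution $\eta^n=\eta^{n-1}(n)\eta^{n-2}$ is asymmetric (the two flanking factors are $\eta^{n-1}$ and $\eta^{n-2}$, not equal), so Observation \ref{eta_obs} only guarantees the short side has length $F_{k+1}-1$; this is harmless because I only need \emph{one} side of the large letter to be contained in $\eta_{[a,b]}$ and to contain all small letters, which is exactly what the observation gives.

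The main obstacle will be pinning down, in the ``no large letters'' case, the bookkeeping that guarantees $x_{t_3}$ (not merely some letter in its level) actually occurs between $x_{t_2}$ and $x_{t_4}$ — i.e. choosing the threshold $t$ in Lemma \ref{lem_eta} correctly so that the embedded $\eta^t$ is long enough to display an incidence of $x_{t_3}$, while still being short enough to be forced to lie between the two bounding letters. This requires relating the numeric value $x_{t_3}$ to the first index $n$ at which that letter appears in $\eta$ (via $\eta^{x_{t_3}}$ already containing the letter $x_{t_3}$), and then invoking the second sentence of Lemma \ref{lem_eta} to find $\eta^1,\dots,\eta^t$ disjointly inside $\eta^{t+2}$. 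Everything else is a transcription of the already-completed $\kappa(q)$ argument.
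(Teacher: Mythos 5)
Your plan works in outline but does not quite reach the proof, and in fact the paper's argument is considerably simpler than what you propose, because there is no case split at all. The crucial observation you are circling but not landing on is that one should apply Observation~\ref{eta_obs} with $k=x_1$, not with $k=x_{t_4}$ (or $k=x_{t_2}$). Since $x_1<x_{t_2}<x_{t_3}<x_{t_4}$ are distinct natural numbers, we always have $x_{t_4}\ge x_1+3>x_1+2$, so \emph{every} occurrence of $x_{t_4}$ in $\eta$ is flanked by a window of length $F_{x_1+1}-1$ containing $x_1$ but containing nothing $\ge x_1+1$ (in particular not $x_{t_2}$). Since the factor $\eta_{[a,b]}$ begins (or ends) with $x_{t_2}>x_1$, this window lies entirely inside $\eta_{[a,b]}$, and we are done: $x_1$ is always present. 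The ``$x_{t_3}$ alternative'' in the definition of nested is never needed for $\eta$; the trichotomy structure of the $\kappa(q)$ proof arises because there $q-1$ distinct letters share each digit ``level'' $k$, whereas in $\eta$ the letter and its level coincide, so the case ``all four in the same level'' collapses.

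Your proposed second case is where the genuine gap sits, and you correctly flag it as ``the main obstacle'' but do not close it. If you try to force $x_{t_3}$ into the factor via Lemma~\ref{lem_eta}, the hypotheses $x_{t_2},x_{t_4}\ge t+2$ cap $t$ at $x_{t_2}-2$, and $\eta^t$ only uses letters from $\{1,\dots,t\}$, so $\eta^{x_{t_2}-2}$ can never exhibit $x_{t_3}>x_{t_2}$. In other words the bookkeeping you worry about cannot be made to work; the resolution is that by choosing $k=x_1$ that case is vacuous. Also note the wording ``writing $x_{t_4}=k$, any incidence of a letter strictly larger than $k+2$'' is the wrong direction: the letter at the endpoint of the factor is $x_{t_4}$ itself, and you need it to exceed $k+2$ for some smaller $k$, not the other way around.
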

\begin{proof}
Observe $\eta$ is branched with $\AAA=\mathbb{N}, \BBB=\{1,2\}, \HHH_1=\{3\}, \HHH_2=\{4\}, \dots, \HHH_k=\{k+2\}, \dots$.

Let $\SSS=\{x_1,x_2,\dots\}$ be a subset of $\mathbb{N}$,  where $x_1 < x_2 \dots$, with $\BBB_{\SSS}=\{x_1, x_2\}$, $\HHH^{\SSS}_ 1=\{x_{3}\}, \dots , \HHH^{\SSS}_t=\{x_{t+2}\}, \dots$. Then, from the Zeckendorf representation, for any $t \ge 3$ the letter immediately preceding or succeeding  an appearance of $x_t$  in $\eta^{\SSS}$ is either $x_1$ or $x_2$. Hence, $\eta^{\SSS}$ is branched with base $\BBB_{\SSS}$, $|\BBB_{\SSS}|\le 2$ and branch sets $\HHH^{\SSS}_t$, and $\eta$ satisfies the conditions of Definition \ref{def_nested}.
\end{proof}

\subsection{Generating new nested words}\label{word_gen}

Other nested words can be generated by similar recurrence relations to those given in Propositions \ref{prop_kappa} and \ref{prop_eta}, although not all such relations give nested words (for instance, if the recurrence relation does not result in a word over an infinite alphabet where each letter repeats an infinite number of times). 

If $\alpha$ is a nested word and $\LLL$ a finite collection of letters (that may or may not be letters in $\AAA$, the alphabet of $\alpha$) then inserting an arbitrary number of letters from $\LLL$ into arbitrary positions in $\alpha$ creates a new word that is also nested, since we can add the finite number of letters in $\LLL$ to the base $\BBB$. Hence, nested words are not rare.

\begin{prop}
There are uncountably many distinct nested words.
\end{prop}
\begin{proof}
Let $\alpha$ be a nested word and $\beta$ an infinite (non-nested) binary word. Interlace the letters of $\alpha$ and $\beta$ to create a new word $\gamma$, so that the even letters of $\gamma$ are $\alpha$ and the odd letters $\beta$. $\gamma$ is nested since we can just add the two letters of $\beta$ to the base of $\alpha$ to create a finite base for $\gamma$. There are uncountably many distinct binary words which gives the result.
\end{proof}

However, the existence of a nested subword $\beta$ in $\alpha$ is not sufficient to make $\alpha$ a nested word. The subword $\alpha \setminus \beta$ may have an unbounded base and contain elements that contradict our desired characteristics - see Section \ref{Sect:KKW_free}.

%
%
%
%
%
%
\section{Path-star hereditary graph classes and \texorpdfstring{$t$}{t{}}-sails} \label{Sect:Path_star}

A convenient way to define a family of hereditary path-star class as described in the introduction is to use an infinite word so that the $i$-th letter in the word indicates the star that connects to the $i$-th vertex in the path. We assume that all leaves of the stars embed in the path since non-embedding leaves have no effect on tree-width, i.e., if $G$ is a finite path-star graph and $H$ is isomorphic to $G$ except for the removal of all non-embedding leaves then $\tw(H)=\tw(G)$.
 
Let $\alpha$ be an infinite word over the alphabet $\mathcal{A}=\mathbb{N}$. We denote the path $P=(V_P,E_P)$ with vertices  $V_P=\{p_j:j \in \mathbb{N}\}$ and edges $E_P=\{(p_j,p_{j+1}): j \in \mathbb{N}\}$. The star-vertices are denoted $V_S=\{s_i:i \in\mathbb{N}\}$ and star edges $E_{S}=\{(p_j,s_{\alpha_j}): j \in \mathbb{N}\}$.

\begin{defn}\label{def_path_star_class}
We define an \emph{infinite path-star graph} $R^\alpha=(V,E)$ where $V=V_P \cup V_S$ and $E=E_P \cup E_S$ (see example in Figure \ref{fig:path_star1}). We define the corresponding \emph{path-star class} $\RRR^\alpha$ to be the finite induced subgraphs of $R^\alpha$.
\end{defn}

Any graph $G \in \RRR^\alpha$ can be witnessed by an embedding $\phi(G)$ into the infinite graph $R^\alpha$. To simplify the presentation we will associate $G$ with a particular embedding in $R^\alpha$ depending on the context.

To avoid confusion when referring to different types of path, we will refer to the \emph{class-path} when referring to the (infinite) path of the path-star class, or a \emph{path component} when referring to a finite section of it.  A path component induced by the vertices $\{p_j, p_{j+1}, \dots, p_{j+k}\}$ we denote $I_{[j,j+k]}$. We use the shorthand \emph{$m$-path-vertex} for a vertex in the class-path corresponding to the letter $m$ in $\alpha$. 

In addition, if $\alpha$ is a nested word over alphabet $\AAA$, and $\alpha^{\SSS}$ a nested subword restricted to the sub-alphabet $\SSS$, then we refer to \emph{base star-vertices} and \emph{base path-vertices} for vertices that correspond to base letters in $\SSS$ and \emph{branch star-vertices} and \emph{branch path-vertices} for vertices that correspond to branch letters in $\SSS$. These vertices will depend on the choice of $\SSS$.

\begin{figure}\centering

\begin{tikzpicture}[scale=0.75,
	vertex2/.style={circle,draw,minimum size=9,fill=blue},]
	
	\foreach \i in {1,...,20} \node (p\i)[label={below:$p_{\i}$}] at (\i-10,0) {};
	\foreach \j in {1,...,19}{\draw (\j-10,0)--(\j-9,0);}
	
	\node (s1) [label={above:$s_1$}] at (-6,5) {};
	\node (s2) [label={above:$s_2$}] at (-2,5) {};
	\node (s3) [label={above:$s_3$}] at (2,5) {};
	\node (s4) [label={above:$s_4$}] at (6,5) {};
	\node (s5) [label={above:$s_5$}] at (10,5) {};

	\draw (s1) -- (p1);\draw (s1) -- (p3);\draw (s1) -- (p5); 
	\draw (s1) -- (p7);\draw (s1) -- (p9);\draw (s1) -- (p11); 
	\draw (s1) -- (p13);\draw (s1) -- (p15);\draw (s1) -- (p17);
	\draw (s1) -- (p19);
	 
	\draw (s2) -- (p2);\draw (s2) -- (p6);\draw (s2) -- (p10); 
	\draw (s2) -- (p14);\draw (s2) -- (p18);
	
	\draw (s3) -- (p4);\draw (s3) -- (p12);\draw (s3) -- (p20);	
	
	\draw (s4) -- (p8);\draw (s5) -- (p16); 

\end{tikzpicture}

\caption{The first section of path-star graph $R^{\kappa(2)}$}
	\label{fig:path_star1}

\end{figure}
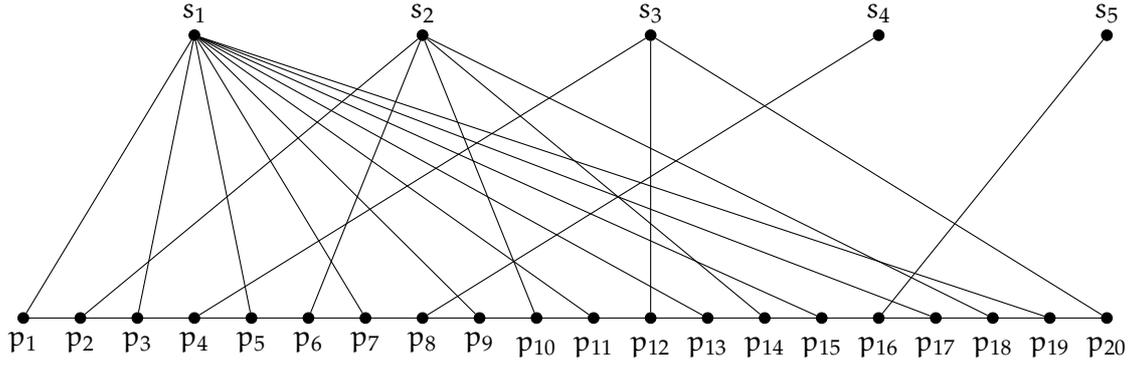

%
%
%
%
%
%
\subsection{Path-star classes with unbounded tree-width and clique-width}

Throughout this section let $\AAA$ be an alphabet and $\alpha$ be an infinite word over $\AAA$. Let $\mathcal{A}^{\alpha} \subseteq \mathcal{A}$ be the set of letters in $\AAA$ that appear an infinite number of times in $\alpha$. That is, these are the letters of $\AAA$ corresponding to the infinite stars in $R^\alpha$.

\begin{thm} \label{unbd_tw}
If $\mathcal{A}^{\alpha}$ is infinite then the graph class $\RRR^\alpha$ has unbounded tree-width and clique-width.
\end{thm}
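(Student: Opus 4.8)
The plan is to show directly that, for every $t$, the graph $R^\alpha$ contains an induced subgraph isomorphic to a $t$-sail. Given that, Lemma~\ref{lem_sail} yields a subgraph of tree-width at least $t-1$ in $\RRR^\alpha$, so $\RRR^\alpha$ has unbounded tree-width; and since every path-star graph has arboricity at most two (it decomposes into the forest $P$ and a forest of stars), clause (iii) of Theorem~\ref{thm_tw_cw} upgrades this to unbounded clique-width. So the whole content is the construction of arbitrarily large $t$-sails.

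To build a $t$-sail I would choose $t$ letters $a_1,\dots,a_t\in\mathcal{A}^{\alpha}$ and $t$ finite intervals of positions $I_1,\dots,I_t$ along $P$, lying left-to-right with a gap between consecutive ones, arranged so that for all $i,j$ the letter $a_i$ occurs somewhere in $I_j$ \emph{if and only if} $i\le j$. Then take the vertex set consisting of the star nodes $s_{a_1},\dots,s_{a_t}$ together with all path vertices $p_m$ with $m\in I_1\cup\dots\cup I_t$. In the induced subgraph the path vertices of $I_j$ form a subpath $P_j$; the $P_j$ are pairwise disjoint with no edges between them because the $I_j$ are pairwise non-adjacent; there are no edges among the $s_{a_i}$ since star nodes are pairwise non-adjacent in $R^\alpha$; and $s_{a_i}$ is adjacent to some vertex of $P_j$ exactly when $a_i$ occurs in $I_j$, i.e.\ exactly when $i\le j$. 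That is precisely the definition of a $t$-sail.

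The letters and intervals are produced greedily in the order $a_1, I_1, a_2, I_2, \dots, a_t, I_t$. Pick $a_1\in\mathcal{A}^{\alpha}$ arbitrarily and let $I_1$ be any interval containing an occurrence of $a_1$. Inductively, with $a_1,\dots,a_{j-1}$ and $I_1,\dots,I_{j-1}$ already fixed, first choose $a_j\in\mathcal{A}^{\alpha}$ distinct from $a_1,\dots,a_{j-1}$ and not occurring anywhere in $I_1\cup\dots\cup I_{j-1}$; this is possible because $I_1\cup\dots\cup I_{j-1}$ is finite and so contains only finitely many distinct letters, while $\mathcal{A}^{\alpha}$ is infinite. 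Then choose $I_j$ to be an interval lying entirely to the right of $I_{j-1}$ (with a gap) that is long enough to contain an occurrence of each of $a_1,\dots,a_j$; this is possible because each of these letters occurs infinitely often, hence at positions arbitrarily far to the right, so beyond $\max I_{j-1}$.

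It then remains to check the ``if and only if'': $a_i$ occurs in $I_i$ and in each $I_j$ with $j>i$ by the way those intervals were built, and $a_i$ occurs in no $I_j$ with $j<i$ because when $a_i$ was chosen we required it to avoid $I_1\cup\dots\cup I_{i-1}$. The only point needing care is this avoidance half, and it is dealt with purely by the order of the choices (each new letter is selected only after the earlier intervals are fixed); I do not expect a real obstacle here. The two hypotheses are used in exactly complementary ways: infinitude of $\mathcal{A}^{\alpha}$ makes the ``fresh letter'' step always possible, and infinitude of the occurrence set of each chosen letter makes the ``next interval far to the right'' step always possible. Since $t$ was arbitrary, $\RRR^\alpha$ contains $t$-sails for all $t$, and the tree-width and clique-width conclusions follow as above.
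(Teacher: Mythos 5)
Your proof is correct and takes essentially the same greedy-interval approach as the paper: choose $t$ letters from $\mathcal{A}^\alpha$ and $t$ disjoint, increasing intervals along the path so that the $k$-th interval contains occurrences of the first $k$ chosen letters, then induce on those positions together with the corresponding star nodes. You add slightly more care than the paper does — picking each new letter to avoid all earlier intervals and insisting on gaps between intervals, so that $s_{a_i}$ meets $P_j$ exactly when $i\le j$ — but since the paper's definition of a $t$-sail only demands adjacency for $i\le j$ (and Lemma~\ref{lem_sail} needs nothing more), this extra precision is harmless rather than necessary.
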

\begin{proof}
We will show that $\RRR^\alpha$ contains a $t$-sail for all $t$ and thus has unbounded tree-width by Lemma \ref{lem_sail}. As $\RRR^\alpha$ has arboricity two it follows from Theorem \ref{thm_tw_cw} that $\RRR^\alpha$ also has unbounded clique-width.

Let $\mathcal{A}^{\alpha}=\{i_1, i_2, \dots \}$. For any $t \in \mathbb{N}$ we can create a set of $t$ factors of $\alpha$ as follows. 

Let $I_1=\{j\}$, where $j$ is the position of the first occurrence of letter $i_1$ in $\alpha$. For $2 \le k \le t$ let $I_k=[x,y]$ be the next interval beyond $I_{k-1}$ where $\alpha_{I_k}$ contains all of $i_1, \dots, i_k$. Such intervals can always be found because the letters in $\mathcal{A}^{\alpha}$ repeat infinitely in $\alpha$. This gives us a set of $t$ disjoint factors of $\alpha$, $\{\alpha_{I_k}: 1 \le k  \le t\}$.

Defining the vertex set $V^t=\{p_i:i \in \cup_{1\le k \le t} I_k\} \cup \{s_{i_k}: 1 \le k \le t\}$ means $R^{\alpha}[V^t]$ is a $t$-sail and the result follows.
\end{proof}

%
%
%
%
%
%
\subsection{Path-star classes are not subclasses of circle graphs}

A \emph{circle graph} is an intersection graph of finitely many chords of a circle. Circle graphs are a much studied hereditary class, in particular, because the class is vertex-minor closed (for definition see \cite{geelen:the-grid-theorem:}).
Geelen, Kwon, McCarty and Wollan \cite{geelen:the-grid-theorem:} showed that a vertex-minor-closed graph class has bounded clique-width if and only if it excludes a circle graph as a vertex-minor.

More recently, in \cite{hickingbotham:treewidth_circlegraphs:}, the authors describe the unavoidable induced subgraphs of circle graphs with large tree-width. To distinguish the results in this paper from those in \cite{hickingbotham:treewidth_circlegraphs:}  we show that path-star graph classes are not subclasses of circle graphs.

Let $\alpha$ be a word over an alphabet with at least two letters. We will call a factor of $\alpha$ that starts with one letter $i$ and ends with another $j$, with no other occurrences of either letter in the factor, an \emph{($i,j$)-alternance}. If $G$ is a graph in the path-star class $\RRR^{\alpha^{\{1,2\}}}$ induced by the two stars $s_1$ and $s_2$ and a path component, we  show that it is not possible to construct a chord representation of $G$ when the sequence in $\alpha$ corresponding to the path component has more than four  ($1,2$)-alternances, i.e., $G$ is not a circle graph.  For example, the word $11212221112$ alternates $5$ times between $1$ and $2$ and therefore does not represent a circle graph.

We may refer to $G$ by name or by $\alpha$ letter sequence (e.g. $G=1221221$). We will always label the path vertices of $G$ starting with $p_1$ so that $1221221$ has path vertices $p_1, \dots, p_7$.

\begin{figure}\centering
\begin{tikzpicture}[scale=1.20,]
	
	\draw[red,very thick] (45:2.5) arc (45:135:2.5) node[draw=none,fill=none,midway,above] () {\Large{$A$}} ;
	\draw[blue,very thick] (-45:2.5) arc (-45:45:2.5)node[draw=none,fill=none,midway,right] () {\Large{$D$}} ;
	\draw[red,very thick] (-135:2.5) arc (-135:-45:2.5)node[draw=none,fill=none,midway,below] () {\Large{$B$}} ;
	\draw[blue,very thick] (-225:2.5) arc (-225:-135:2.5)node[draw=none,fill=none,midway,left] () {\Large{$C$}} ;
	
	\draw[ultra thick] (-225:2.5) -- (-135:2.5) ;
	\draw[ultra thick] (-45:2.5) -- (45:2.5) ;
	
	\draw[very thick] (165:2.5) -- (100:2.5) ;
	\draw[very thick] (115:2.5) -- (-30:2.5) ;
	\draw[very thick] (205:2.5) -- (60:2.5) ;
	\draw[very thick] (75:2.5) -- (15:2.5) ;
	
	\node (s21)[draw=none,fill=none][label={above:$s_2$}] at (45:2.5) {};
	\node (s22)[draw=none,fill=none][label={below:$s_2$}] at (-45:2.5) {};
	\node (s11)[draw=none,fill=none][label={above:$s_1$}] at (135:2.5) {};
	\node (s12)[draw=none,fill=none][label={below:$s_1$}] at (-135:2.5) {};
	\node (p11)[draw=none,fill=none][label={left:$p_1$}] at (165:2.5) {};
	\node (p12)[draw=none,fill=none][label={above:$p_1$}] at (100:2.5) {};
	\node (p21)[draw=none,fill=none][label={above:$p_2$}] at (115:2.5) {};
	\node (p22)[draw=none,fill=none][label={right:$p_2$}] at (-30:2.5) {};
	\node (p31)[draw=none,fill=none][label={left:$p_3$}] at (205:2.5) {};
	\node (p32)[draw=none,fill=none][label={above:$p_3$}] at (60:2.5) {};
	\node (p41)[draw=none,fill=none][label={above:$p_4$}] at (75:2.5) {};
	\node (p42)[draw=none,fill=none][label={right:$p_4$}] at (15:2.5) {};

	\node (5)[draw=none,fill=none] at (0,-3.5) {$1212$};

\begin{scope}[shift={(7,0)}]

	\draw[red,very thick] (45:2.5) arc (45:135:2.5) node[draw=none,fill=none,midway,above] () {\Large{$A$}} ;
	\draw[blue,very thick] (-45:2.5) arc (-45:45:2.5)node[draw=none,fill=none,midway,right] () {\Large{$D$}} ;
	\draw[red,very thick] (-135:2.5) arc (-135:-45:2.5)node[draw=none,fill=none,midway,below] () {\Large{$B$}} ;
	\draw[blue,very thick] (-225:2.5) arc (-225:-135:2.5)node[draw=none,fill=none,midway,left] () {\Large{$C$}} ;
	
	\draw[ultra thick] (-225:2.5) -- (-135:2.5) ;
	\draw[ultra thick] (-45:2.5) -- (45:2.5) ;
	
	\draw[very thick] (155:2.5) -- (100:2.5) ;
	\draw[very thick] (115:2.5) -- (30:2.5) ;
	\draw[very thick] (195:2.5) -- (75:2.5) ;
	\draw[very thick] (170:2.5) -- (-60:2.5) ;
	\draw[very thick] (-105:2.5) -- (-30:2.5) ;
	\draw[very thick] (-150:2.5) -- (-80:2.5) ;
	
	\node (s21)[draw=none,fill=none][label={above:$s_2$}] at (45:2.5) {};
	\node (s22)[draw=none,fill=none][label={below:$s_2$}] at (-45:2.5) {};
	\node (s11)[draw=none,fill=none][label={above:$s_1$}] at (135:2.5) {};
	\node (s12)[draw=none,fill=none][label={below:$s_1$}] at (-135:2.5) {};
	\node (p11)[draw=none,fill=none][label={left:$p_1$}] at (155:2.5) {};
	\node (p12)[draw=none,fill=none][label={above:$p_1$}] at (100:2.5) {};
	\node (p21)[draw=none,fill=none][label={above:$p_2$}] at (115:2.5) {};
	\node (p22)[draw=none,fill=none][label={right:$p_2$}] at (30:2.5) {};
	\node (p31)[draw=none,fill=none][label={left:$p_3$}] at (195:2.5) {};
	\node (p32)[draw=none,fill=none][label={above:$p_3$}] at (75:2.5) {};
	\node (p41)[draw=none,fill=none][label={left:$p_4$}] at (170:2.5) {};
	\node (p42)[draw=none,fill=none][label={below:$p_4$}] at (-60:2.5) {};
	\node (p51)[draw=none,fill=none][label={below:$p_5$}] at (-105:2.5) {};
	\node (p52)[draw=none,fill=none][label={right:$p_5$}] at (-30:2.5) {};
	\node (p61)[draw=none,fill=none][label={left:$p_6$}] at (-150:2.5) {};
	\node (p62)[draw=none,fill=none][label={below:$p_6$}] at (-80:2.5) {};

	\node (7)[draw=none,fill=none] at (0,-3.5) {$121121$};

\end{scope}
	
\end{tikzpicture}

\caption{Chord representations of circle graphs $1212$ and $121121$}
	\label{fig:pathstar_circle}

\end{figure}
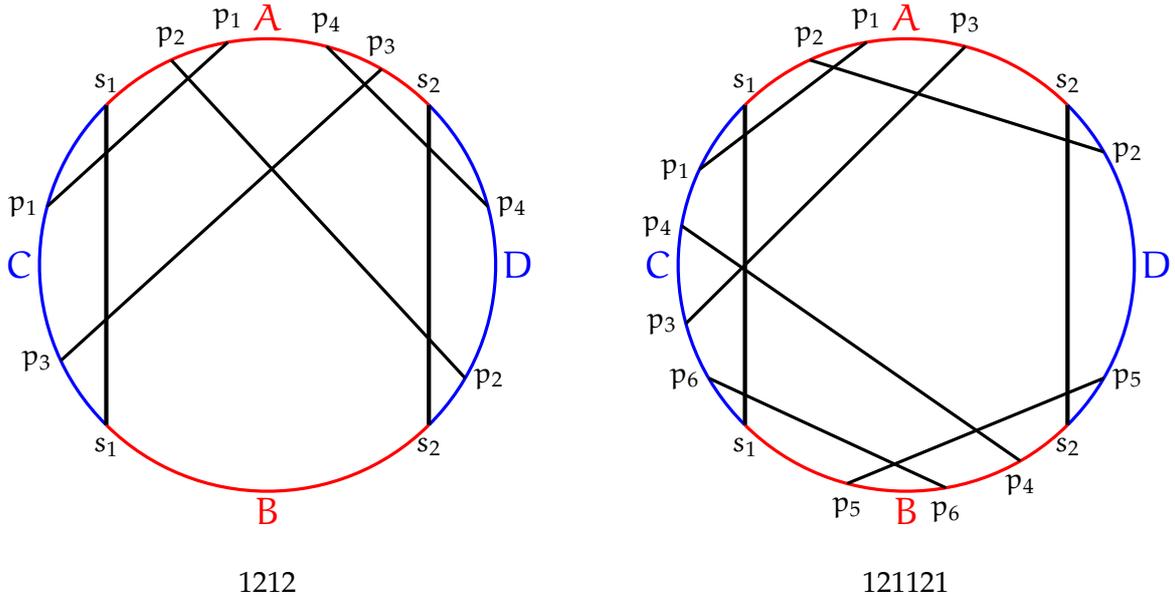

\begin{thm} \label{pathstar_circle}
If there are two letters in the word $\alpha$ that alternate more than four times then the graph class $\RRR^\alpha$ is not a subclass of circle graphs. 
\end{thm}
\begin{proof}
Every graph in $\RRR^{\alpha^{\{1,2\}}}$ is a vertex-minor of a graph in $\RRR^{\alpha}$. As circle graphs are vertex-minor closed, if $\RRR^\alpha$ is a subclass of circle graphs then so is $\RRR^{\alpha^{\{1,2\}}}$. Thus, if we can find a graph in $\RRR^{\alpha^{\{1,2\}}}$ that is not a circle graph then we are done. Also note that there is a $1-1$ correspondence between the ($1,2$)-alternances in $\alpha$ and $\alpha^{\{1,2\}}$. 

Suppose, for a contradiction, that there exists a factor $\beta$ of $\alpha^{\{1,2\}}$ in which the letters $1$ and $2$ alternate more than four times, with the property that the graph $G$ induced by the stars $s_1$ and $s_2$ and the path component corresponding to the factor $\beta$ is a circle graph.

We try to construct a chord representation for $G$ -- see examples in Figure \ref{fig:pathstar_circle}. Without loss of generality, we assume that the first letter of $\beta$ is $1$ and the second $2$.

Note that the chords representing $s_1$ and $s_2$ do not cross,  shown as vertical lines in Figure \ref{fig:pathstar_circle}. Designating the arcs between $s_1$ and $s_2$ $A$ and $B$ (shown in red), and the arcs bounded by $s_1$ and $s_2$ $C$ and $D$ respectively (shown in blue), note that every path vertex adjacent to $s_1$ must be represented by a chord with one end in $C$ and the other in either $A$ or $B$, and similarly for $s_2$, a chord with one end in $D$ and the other in either $A$ or $B$. Therefore,  if $p_1$ and $p_2$ are the two chords representing the first ($1,2$)-alternance then they must cross and both have an end in either $A$ or $B$. Without loss of generality, let them both have an end in $A$. We will show that there cannot be many consecutive alternances happening in the same sector.

Notice that for any $i$, the chords representing $p_{i+2}, p_{i+3}, \dots$ must all be on the same side of the chord representing $p_i$ as none of them can cross this chord. Also notice that if $i<j<k$ and $\alpha_{p_i} =\alpha_{p_j}=1$, $\alpha_{p_k}=2$ then the chord for $p_j$ must be situated on the $s_2$ side of the chord for $p_i$ to accommodate the next alternance (and likewise with $1$ and $2$ reversed).

Suppose that no path-vertex chord has an end in $B$. If $p_3$ is a $1$ (i.e., a second ($1,2$)-alternance) then its chord must cross only $s_1$ and $p_2$. If this is on the 'non-$s_2$' side of $p_1$  then this prevents any further alternance since the path is blocked from star $s_2$ by chord $p_1$. So for there to be a third alternance, $p_3$ must be on the $s_2$ side as shown in the $1212$ example in Figure \ref{fig:pathstar_circle}.   

The chord representing path-vertex $p_4$ cannot cross $p_1$ or $p_2$. Furthermore, if it is on the 'non-$s_1$' side of $p_2$  then this prevents any further alternance since the path is blocked from star $s_1$ by chord $p_2$. Hence, without using arc $B$, we can have at most three ($1,2$)-alternances.

Now suppose that a path-vertex chord may have an end in $B$. We may have at most two alternances through $A$ before switching to $B$, as if we start with three alternances through $A$, as shown in the $1212$ example in Figure \ref{fig:pathstar_circle}, then $p_4$ is blocked from $B$.

If we switch to $B$ after two alternances then $p_4$ is a $1$ with chord ends in $C$ and $B$. It is possible to have at most two alternances through $B$ before we reach $p_6$ which is blocked by $p_4$, as shown in the $121121$ example in Figure \ref{fig:pathstar_circle}, and thereafter no further alternance is possible either via $A$ or $B$.

It follows that the maximum number of alternances possible is four. By assumption, the letters in $\beta$ alternate more than four times and hence we cannot construct a chord representation of $G$, and we have a contradiction.
\end{proof}

%
%
%
%
%
%

\section{Nested path-star hereditary graph classes} \label{Sect:Nested}

We now focus on path-star graph classes defined by nested words.

%
%
%
%
%
%

\subsection{Nested path-star classes are \texorpdfstring{$KKW$}{KKW{}}-free}\label{Sect:KKW_free}

It is quite possible for path-star graph classes to contain a large wall -- see an example in Figure \ref{fig:wall2}. However, we  show that path-star graph classes created from nested words   (see Definition \ref{def_nested}) are $KKW$-free. 

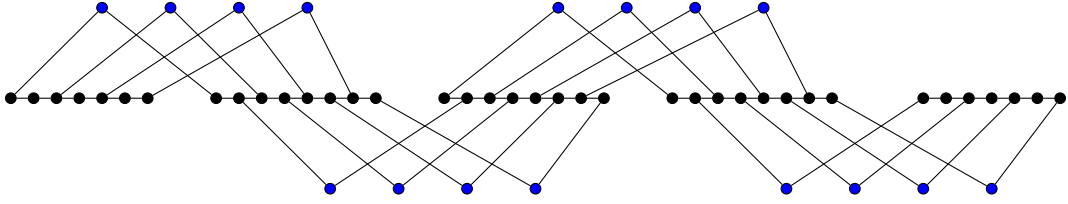
\begin{figure}\centering

\begin{tikzpicture}[scale=0.30,
	vertex2/.style={circle,draw,minimum size=4,fill=blue},]
	
	\foreach \i in {2,3,4,5,6,7,8,11,12,13,14,15,16,17,18,
	21,22,23,24,25,26,27,28,31,32,33,34,35,36,37,38,
	42,43,44,45,46,47,48} \node (p\i) at (\i-10,0) {};
	
	\node [vertex2] (s2)  at (-4,4) {};
	\node [vertex2](s4)   at (-1,4) {};
	\node [vertex2](s6)   at (2,4) {};
	\node [vertex2](s8)   at (5,4) {};
	\node [vertex2](s20)  at (16,4) {};
	\node [vertex2](s22)  at (19,4) {};
	\node [vertex2](s24)  at (22,4) {};
	\node [vertex2](s26)  at (25,4) {};
	
	\node [vertex2](s10)  at (6,-4) {};
	\node [vertex2](s12)  at (9,-4) {};
	\node [vertex2](s14)  at (12,-4) {};
	\node [vertex2](s16)  at (15,-4) {};
	\node [vertex2](s30)  at (26,-4) {};
	\node [vertex2](s32)  at (29,-4) {};
	\node [vertex2](s34)  at (32,-4) {};
	\node [vertex2](s36)  at (35,-4) {};

	\draw (p2)--(p3)--(p4)--(p5)--(p6)--(p7)--(p8);	
	\draw (p11)--(p12)--(p13)--(p14)--(p15)--(p16)--(p17)--(p18);
	\draw (p21)--(p22)--(p23)--(p24)--(p25)--(p26)--(p27)--(p28);
	\draw (p31)--(p32)--(p33)--(p34)--(p35)--(p36)--(p37)--(p38);
	\draw (p42)--(p43)--(p44)--(p45)--(p46)--(p47)--(p48);
		
	\draw (s2) -- (p2);	\draw (s4) -- (p4); 
	\draw (s6) -- (p6); \draw (s8) -- (p8); 
	\draw (s2) -- (p11); \draw (s4) -- (p13); 
	\draw (s6) -- (p15); \draw (s8) -- (p17);
	 
	\draw (s10) -- (p12); \draw (s12) -- (p14); 
	\draw (s14) -- (p16); \draw (s16) -- (p18);
	\draw (s10) -- (p22); \draw (s12) -- (p24); 
	\draw (s14) -- (p26); \draw (s16) -- (p28);
	
	\draw (s20) -- (p21);	\draw (s22) -- (p23); 
	\draw (s24) -- (p25); \draw (s26) -- (p27); 
	\draw (s20) -- (p31); \draw (s22) -- (p33); 
	\draw (s24) -- (p35); \draw (s26) -- (p37);
	
	\draw (s30) -- (p32); \draw (s32) -- (p34); 
	\draw (s34) -- (p36); \draw (s36) -- (p38);
	\draw (s30) -- (p42); \draw (s32) -- (p44); 
	\draw (s34) -- (p46); \draw (s36) -- (p48);

\end{tikzpicture}

\caption{Example of a subdivision of a $4 \times 4$-wall embedded in a path-star graph (star-vertices blue)}
	\label{fig:wall2}

\end{figure}

Observe that if $\alpha$ is a nested word then any connected graph $G$ in $\RRR^{\alpha}$ that does not contain a base-path-vertex contains only star- and path-vertices corresponding to a single branch of $\alpha$, since it is not possible to have a path connecting vertices corresponding to two different branches that does not contain a base-path-vertex. 

\begin{lemma}\label{lem_twostar}
If $\alpha$ is a nested word  and $G$ is a graph in $\RRR^{\alpha}$ that does not contain a base-path-vertex then any induced hole in $G$ must contain exactly two star-vertices.
\end{lemma}
\begin{proof}
Clearly, a hole must contain at least one star-vertex, otherwise it would be a path. Suppose it contained only one star-vertex, say $s_x$, then $\alpha$ would have a factor $x \dots x$ containing no base letter since $G$ has no base-path-vertices.  Since the vertices only correspond to a single branch of $\alpha$ and branch letters appear in branch-order, such a factor does not exist so we have a contradiction. 

Suppose our hole contains three or more star-vertices, say $s_x$, $s_y$ and $s_z$ where $x$, $y$ and $z$ appear in this order in a branch. This requires the three stars to be connected by path segments corresponding to branch sequences $x \dots y $, $x \dots z$ and $y \dots z$ in $\alpha$. As $G$ is single-branch then the sequence  $x \dots z$ must contain the letter $y$. The corresponding path-vertex must be adjacent to $s_y$ creating a chord in the cycle, so it is not a hole. A contradiction.

Therefore, the only possibility is that any hole in $G$ must contain exactly two star-vertices.
\end{proof}

We will call a graph consisting of five 'bricks' of a wall, as shown with numbered vertices in Figure \ref{fig:wall3}, a \emph{$5$-wall}.

Notice that a $5$-wall contains exactly eleven induced chordless cycles or holes -- which we will call $H_1, \dots, H_{11}$ (shown in Figure \ref{fig:wall3}) with vertex sets $V_1, \dots, V_{11}$ respectively.

\begin{figure}\centering

\begin{tikzpicture}[scale=0.9,
	vertex2/.style={circle,draw,minimum size=6,fill=blue},]

	\node   (1) [label={above:$1$}] at (0,3) {};
	\node   (2) [label={above:$2$}] at (1,3) {};
	\node   (3) [label={above:$3$}] at (2,3) {};
	\node   (4) [label={above:$4$}] at (3,3) {};
	\node   (5) [label={above:$5$}] at (4,3) {};
	\node   (6) [label={above:$6$}] at (5,3) {};
	\node   (7) [label={above:$7$}] at (6,3) {};
	
	\node   (8) [label={below:$8$}] at (0,2) {};
	\node   (9) [label={above:$9$}] at (1,2) {};
	\node   (10) [label={below:$10$}] at (2,2) {};
	\node   (11) [label={above:$11$}] at (3,2) {};
	\node   (12) [label={below:$12$}] at (4,2) {};
	\node   (13) [label={above:$13$}] at (5,2) {};
	\node   (14) [label={below:$14$}] at (6,2) {};
	
	\node   (15) [label={below:$15$}] at (1,1) {};
	\node   (16) [label={below:$16$}] at (2,1) {};
	\node   (17) [label={below:$17$}] at (3,1) {};
	\node   (18) [label={below:$18$}] at (4,1) {};
	\node   (19) [label={below:$19$}] at (5,1) {};

	\draw (1)--(2)--(3)--(4)--(5)--(6)--(7);
	\draw (1)--(8)--(9)--(15)--(16)--(17)--(18)--(19)--(13)--(14)--(7);
	\draw (3)--(10)--(9);\draw (5)--(12)--(11)--(10);\draw (11)--(17);
	\draw (5)--(6);\draw (12)--(13);

	\node [draw=none,fill=none] at (0.5,2.5) {\textcolor{blue}{$H_1$}};
	\node [draw=none,fill=none] at (2.5,2.5) {\textcolor{blue}{$H_2$}};
	\node [draw=none,fill=none] at (4.5,2.5) {\textcolor{blue}{$H_3$}};
	\node [draw=none,fill=none] at (1.5,1.5) {\textcolor{blue}{$H_4$}};
	\node [draw=none,fill=none] at (3.5,1.5) {\textcolor{blue}{$H_5$}};

\begin{scope}[shift={(8,0)}]

	\node   (1) [label={above:$1$}] at (0,3) {};
	\node   (2) [label={above:$2$}] at (1,3) {};
	\node   (3) [label={above:$3$}] at (2,3) {};
	\node   (4) [label={above:$4$}] at (3,3) {};
	\node   (5) [label={above:$5$}] at (4,3) {};
	\node   (6) [label={above:$6$}] at (5,3) {};
	\node   (7) [label={above:$7$}] at (6,3) {};
	
	\node   (8) [label={below:$8$}] at (0,2) {};
	\node   (9) [label={above:$9$}] at (1,2) {};
	
	\node   (13) [label={above:$13$}] at (5,2) {};
	\node   (14) [label={below:$14$}] at (6,2) {};
	
	\node   (15) [label={below:$15$}] at (1,1) {};
	\node   (16) [label={below:$16$}] at (2,1) {};
	\node   (17) [label={below:$17$}] at (3,1) {};
	\node   (18) [label={below:$18$}] at (4,1) {};
	\node   (19) [label={below:$19$}] at (5,1) {};

	\draw (1)--(2)--(3)--(4)--(5)--(6)--(7)--(14)--(13)--(19)--(18)--(17)--(16)--(15)--(9)--(8)--(1);

	\node [draw=none,fill=none] at (3,2) {\textcolor{blue}{$H_6$}};	
	
\end{scope}	

\begin{scope}[shift={(0,-8)}]

	\node   (1) [label={above:$1$}] at (0,7) {};
	\node   (2) [label={above:$2$}] at (1,7) {};
	\node   (3) [label={above:$3$}] at (2,7) {};
	\node   (4) [label={above:$4$}] at (3,7) {};
	\node   (5) [label={above:$5$}] at (4,7) {};
	
	\node   (8) [label={below:$8$}] at (0,6) {};
	\node   (9) [label={above:$9$}] at (1,6) {};
	
	\node   (12) [label={below:$12$}] at (4,6) {};
	\node   (13) [label={above:$13$}] at (5,6) {};
	
	\node   (15) [label={below:$15$}] at (1,5) {};
	\node   (16) [label={below:$16$}] at (2,5) {};
	\node   (17) [label={below:$17$}] at (3,5) {};
	\node   (18) [label={below:$18$}] at (4,5) {};
	\node   (19) [label={below:$19$}] at (5,5) {};
	
\draw (1)--(2)--(3)--(4)--(5)--(12)--(13)--(19)--(18)--(17)--(16)--(15)--(9)--(8)--(1);
	\node [draw=none,fill=none] at (2.5,6) {\textcolor{blue}{$H_7$}};

	\node   (23) [label={above:$3$}] at (1,3) {};
	\node   (24) [label={above:$4$}] at (2,3) {};
	\node   (25) [label={above:$5$}] at (3,3) {};
	\node   (26) [label={above:$6$}] at (4,3) {};
	\node   (27) [label={above:$7$}] at (5,3) {};
	
	\node   (29) [label={above:$9$}] at (0,2) {};
	\node   (30) [label={below:$10$}] at (1,2) {};
	\node   (33) [label={above:$13$}] at (4,2) {};
	\node   (34) [label={below:$14$}] at (5,2) {};
	
	\node   (35) [label={below:$15$}] at (0,1) {};
	\node   (36) [label={below:$16$}] at (1,1) {};
	\node   (37) [label={below:$17$}] at (2,1) {};
	\node   (38) [label={below:$18$}] at (3,1) {};
	\node   (39) [label={below:$19$}] at (4,1) {};

	\draw (23)--(24)--(25)--(26)--(27)--(34)--(33)--(39)--(38)--(37)--(36)--(35)--(29)--(30)--(23);
	\node [draw=none,fill=none] at (2.5,2) {\textcolor{blue}{$H_8$}};

\end{scope}	

\begin{scope}[shift={(7,-8)}]

	\node   (1) [label={above:$1$}] at (0,7) {};
	\node   (2) [label={above:$2$}] at (1,7) {};
	\node   (3) [label={above:$3$}] at (2,7) {};
	\node   (4) [label={above:$4$}] at (3,7) {};
	\node   (5) [label={above:$5$}] at (4,7) {};
	
	\node   (8) [label={below:$8$}] at (0,6) {};
	\node   (9) [label={above:$9$}] at (1,6) {};
	\node   (11) [label={above:$11$}] at (3,6) {};
	\node   (12) [label={below:$12$}] at (4,6) {};
		
	\node   (15) [label={below:$15$}] at (1,5) {};
	\node   (16) [label={below:$16$}] at (2,5) {};
	\node   (17) [label={below:$17$}] at (3,5) {};
	
	\draw (1)--(2)--(3)--(4)--(5)--(12)--(11)--(17)--(16)--(15)--(9)--(8)--(1);
	\node [draw=none,fill=none] at (2,6) {\textcolor{blue}{$H_9$}};

	\node   (23) [label={above:$3$}] at (5,5) {};
	\node   (24) [label={above:$4$}] at (6,5) {};
	\node   (25) [label={above:$5$}] at (7,5) {};
	\node   (26) [label={above:$6$}] at (8,5) {};
	\node   (27) [label={above:$7$}] at (9,5) {};
	
	\node   (30) [label={below:$10$}] at (5,4) {};
	\node   (31) [label={above:$11$}] at (6,4) {};
	\node   (33) [label={above:$13$}] at (8,4) {};
	\node   (34) [label={below:$14$}] at (9,4) {};
	
	\node   (37) [label={below:$17$}] at (6,3) {};
	\node   (38) [label={below:$18$}] at (7,3) {};
	\node   (39) [label={below:$19$}] at (8,3) {};

	\draw (23)--(24)--(25)--(26)--(27)--(34)--(33)--(39)--(38)--(37)--(31)--(30)--(23);
	\node [draw=none,fill=none] at (7,4) {\textcolor{blue}{$H_{10}$}};

	\node   (43) [label={above:$3$}] at (1,3) {};
	\node   (44) [label={above:$4$}] at (2,3) {};
	\node   (45) [label={above:$5$}] at (3,3) {};
	
	\node   (49) [label={above:$9$}] at (0,2) {};
	\node   (50) [label={below:$10$}] at (1,2) {};
	\node   (52) [label={below:$12$}] at (3,2) {};
	\node   (53) [label={above:$13$}] at (4,2) {};
	
	\node   (55) [label={below:$15$}] at (0,1) {};
	\node   (56) [label={below:$16$}] at (1,1) {};
	\node   (57) [label={below:$17$}] at (2,1) {};
	\node   (58) [label={below:$18$}] at (3,1) {};
	\node   (59) [label={below:$19$}] at (4,1) {};

	\draw (43)--(44)--(45)--(52)--(53)--(59)--(58)--(57)--(56)--(55)--(49)--(50)--(43);
	\node [draw=none,fill=none] at (2,2) {\textcolor{blue}{$H_{11}$}};

\end{scope}

\end{tikzpicture}\caption{A $5$-wall (top left) together with its eleven holes $H_1, \dots, H_{11}$}
	\label{fig:wall3}

\end{figure}
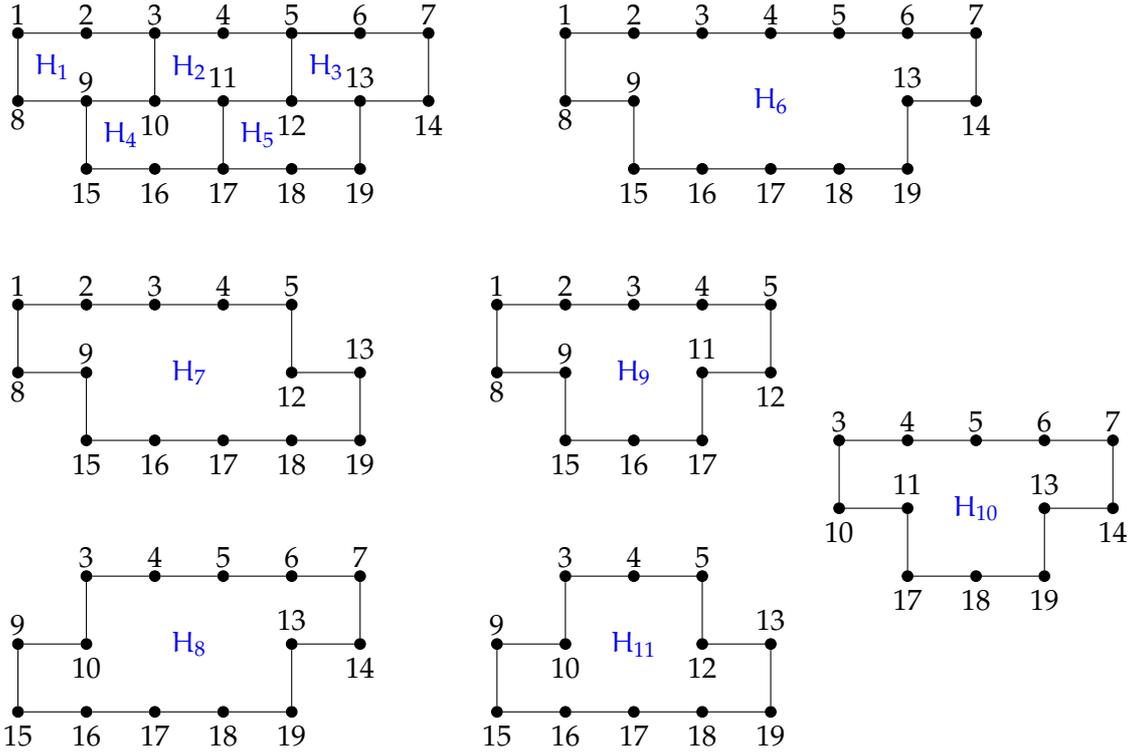

\begin{lemma}\label{lem_Twall}
If $\alpha$ is a nested word  then every subdivision of a $5$-wall in $\RRR^{\alpha}$ contains a base-path-vertex.
\end{lemma}
\begin{proof}
Suppose, for a contradiction, that $\RRR^{\alpha}$ contains a graph $G$ that is a subdivision of a $5$-wall that does not contain a base-path vertex. From Lemma \ref{lem_twostar} there must be precisely two star-vertices in each vertex-set $V_1, \dots, V_{11}$ in $G$.

Firstly, we consider $V_1$ and $V_3$. Notice that if there are two star-vertices in $V_1 \cap V_{8} = V_1 \cap V_{11}$ then there can be no star-vertices in $V_3 \subset (V_8 \cup V_{11})$, a contradiction. So there must be at least one star vertex in $V_1 \setminus V_{11} \subset V_6$ (set $W_1$) and by symmetry at least one star-vertex in  $V_3 \setminus V_{11} \subset V_6$ (set $W_2$). Since these are disjoint subsets of $V_6$ there are no other star-vertices in $V_6$.

Observe that a degree three vertex in a path-star graph must either be a star-vertex or adjacent to a star-vertex. In particular, vertex $17$ in Figure \ref{fig:wall3} is degree three, and combined with the fact that the vertices adjacent to it in $V_6$ are not star-vertices, as demonstrated above, means that there is a star-vertex in $(V_{9}\cap V_{10}) \setminus V_6$ (set $W_3$, that is, the path segment $11$ -- $17$).
  
One star-vertex is in $W_1 \subset V_9$, so another star-vertex is in $V_9\setminus W_1$. Likewise, there is a star-vertex in $W_2 \subset V_{10}$, so another star-vertex is in $V_{10}\setminus W_2$. Both of these vertex sets include $W_3$, which from the previous paragraph must contain a star-vertex.  Consequently, neither $V_9 \setminus (W_1 \cup W_3)$ nor $V_{10} \setminus (W_2 \cup W_3)$ contains a star-vertex.

Combining these sets, $(V_9 \cup V_{10}) \setminus (W_1 \cup W_2 \cup W_3)$ does not contain a star-vertex. But this contains all of $V_2$ except for vertex $11$, so $V_2$ contains at most one star-vertex, a contradiction.
 
Therefore, it is not possible to construct $G$ without a base-path vertex.
\end{proof}

\begin{thm} \label{KKW_free}
If $\alpha$ is a nested word then $\RRR^{\alpha}$ is $KKW$-free.
\end{thm}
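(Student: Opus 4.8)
The plan is to show that $\RRR^\alpha$, for $\alpha$ nested, cannot contain a subdivision of $K_t$, a subdivision of $K_{t,t}$, a subdivision of the $(t\times t)$-wall, or the line graph of a subdivision of the $(t\times t)$-wall, once $t$ is large enough. Since $\RRR^\alpha$ has arboricity two, line graphs of subdivisions of walls are automatically excluded for $t$ large: such a line graph has too many vertices of degree $\ge 3$ clustered together to be $2$-degenerate (alternatively, it contains $K_4$ as a subgraph once the wall is big, and a $2$-uniformly sparse class cannot contain $K_4$ as a subgraph — or, more carefully, cannot contain arbitrarily large line graphs of walls because those have average degree approaching $4$). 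So the bulk of the work is ruling out subdivisions of $K_t$, $K_{t,t}$, and walls; and $K_{t,t}$ and the wall both contain a subdivision of $K_3$ with long arms, so it is cleanest to argue via a single structural bound.

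The key tool is Lemma \ref{nested}. First I would observe that any graph $G \in \RRR^\alpha$ has a natural bipartition of its vertices into path-vertices (degree at most, roughly, $2$ plus the number of incident stars — but in a fixed embedding, path-vertices have at most one star-neighbour, so degree $\le 3$) and star nodes (which may have large degree). A subdivision of $K_t$ has $t$ \emph{branch vertices} of degree $t-1 \ge 3$; in $G$ these branch vertices must therefore be star nodes, say $s_{x_1},\dots,s_{x_t}$ with $x_1 < \dots < x_t$ (the subdivision may also route through other star nodes internally, but the $t$ branch vertices of degree $\ge 3$ must be stars). For every triple $1 < i < j < k \le t$, the branch path joining $s_{x_i}$ to $s_{x_k}$ in the subdivision must, by Lemma \ref{nested}, pass through a path-vertex $p_y$ with $\alpha_y \in \{x_1, x_j\}$ — i.e. through a leaf of star $s_{x_1}$ or star $s_{x_j}$. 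A path-vertex $p_y$ with $\alpha_y = x_1$ is adjacent (in $G$) only to $p_{y-1}$, $p_{y+1}$, and $s_{x_1}$; in a subdivision of $K_t$ a degree-$2$ (subdivision) vertex on the $s_{x_i}$–$s_{x_k}$ branch path has both its neighbours on that same path, so $s_{x_1}$ cannot be one of them — meaning the branch path actually contains a neighbour of $p_y$ that is $s_{x_1}$ only if $s_{x_1}$ is itself an interior vertex of that branch path, which is impossible since $s_{x_1}$ is a branch vertex. Hence the branch path from $s_{x_i}$ to $s_{x_k}$ must pass through a leaf of $s_{x_j}$ — but then it passes through (a neighbour of) the branch vertex $s_{x_j}$, again contradicting the internal disjointness of distinct branch paths in a subdivision of $K_t$ for $t$ large. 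Making this precise gives the contradiction: for each of the $\binom{t-2}{1}$ choices of $j$ (with, say, $i=2$, $k=t$) we force the $s_{x_2}$–$s_{x_t}$ branch path to meet the star $s_{x_j}$, but that single branch path has bounded length and can meet only boundedly many stars, so for $t$ large enough it cannot meet all of $s_{x_3}, \dots, s_{x_{t-1}}$ — unless it repeatedly uses leaves of the single star $s_{x_1}$, and a path-vertex argument as above shows a simple path uses at most two leaves of any fixed star without revisiting the star node.

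The same argument, with minor bookkeeping changes, handles subdivisions of $K_{t,t}$ and of the $(t\times t)$-wall: in each case there are $\Omega(t)$ branch vertices of degree $\ge 3$, forced to be distinct star nodes, and Lemma \ref{nested} applied to well-chosen triples forces one particular branch path to interact with linearly many stars, which is impossible. The main obstacle I anticipate is the careful handling of the case where a branch \emph{path} in the subdivision passes through additional star nodes (not branch vertices) — one must track that each such pass consumes only two path-vertices adjacent to that star and that the sequence of letters read along the path is exactly of the form in Lemma \ref{nested}, so that the Lemma applies verbatim; and one must choose the embedding of $G$ into $R^\alpha$ so that each path-vertex has a unique star-neighbour, which is legitimate since we only need \emph{some} embedding. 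Once the counting is set up — branch path length is bounded by the total number of path-vertices used, which for a fixed subdivision is a function of $t$ only after we quotient out subdivision vertices — the contradiction for $t$ exceeding an absolute constant (independent of $\alpha$, in fact $t \ge 5$ or so suffices) falls out.
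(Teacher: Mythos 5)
Your strategy is the right one---apply Lemma \ref{nested} to a branch path and derive a contradiction because the forced path-vertex $p_y$, which should have degree $2$ as a subdivision vertex, ends up with a third neighbour in $G$---but the execution has several genuine gaps. The most serious is the handling of line graphs of walls: you claim these are ``automatically excluded'' because $\RRR^\alpha$ has arboricity two, but this is false. A wall is bipartite with maximum degree $3$, so the line graph of a (subdivision of a) wall contains no $K_4$ subgraph; and even if it did, $K_4$ has arboricity exactly $2$, so that test would not exclude it. Nor does the average degree of the line graph of a wall exceed what arboricity $2$ permits: it approaches $4$ from below, and graphs with arboricity $2$ can have up to $2(n-1)$ edges, i.e.\ average degree approaching $4$. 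The paper in fact has to handle this case by the same combinatorial argument (each triangle in the line graph must contain a star node), and you cannot skip it.

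The second gap is the bookkeeping around Lemma \ref{nested}. You index only the branch vertices as $s_{x_1},\dots,s_{x_t}$, but the lemma's $x_1$ is the smallest index among \emph{all} star nodes present in $G$, including any that the subdivision routes through as interior (degree-$2$) vertices. If the smallest such star node is not a branch vertex, your reading of the lemma's conclusion is wrong, and your closing claim that $s_{x_1}$ ``is a branch vertex'' is simply unjustified. The paper resolves this with an explicit case split ($t_1 = 1$ versus $t_1 > 1$): when the smallest star node is not a branch vertex, one shows that it, as an interior vertex of exactly one branch path, is nevertheless forced by the lemma to have a neighbour on a different branch path, giving it degree $\ge 3$---a contradiction with being a subdivision vertex. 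Finally, the counting argument you append (``that single branch path has bounded length and can meet only boundedly many stars'') is both incorrect (branch paths of a subdivision may be arbitrarily long) and unnecessary: a single application of the lemma to one well-chosen triple already yields the degree-$3$ contradiction, and the paper needs only $K_5$, $K_{4,4}$, and $W_{4\times4}$ rather than ``$t$ large enough''.
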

\begin{proof}
$\RRR^{\alpha}$ has arboricity two so does not contain $K_5$ or $K_{4,4}$.

We show that $\RRR^{\alpha}$ does not contain a subdivision of a $t \times t$ wall for $t$ where $\alpha$ is $\mathfrak{b}$-nested and $\Big\lfloor\frac{t}{10}\Big\rfloor > \sqrt{\frac{\mathfrak{b}}{3}}$.

Suppose, for a contradiction, that $\RRR^{\alpha}$ contains a graph $G$ that is a subdivision of a $t \times t$ wall for some $t$ where $\Big\lfloor\frac{t}{10}\Big\rfloor > \sqrt{\frac{\mathfrak{b}}{3}}$. Fix some embedding of this wall into $R^{\alpha}$, and let $\SSS \subseteq \AAA$ denote the letters whose star-vertices appear in this embedding. 

Since $\alpha$ is $\mathfrak{b}$-nested, $\SSS$ has a base $\BBB$ where $|\BBB| \le \mathfrak{b}$. For any $x \in \SSS$ there can be at most three $x$-path-vertices in $G$  since  $s_x$ is a vertex of degree at most three. Hence, $V(G)$ can contain at most $3\mathfrak{b} < 9\Big\lfloor\frac{t}{10}\Big\rfloor^2$ base-path-vertices.

From Lemma \ref{lem_Twall} every induced subdivision of a $5$-wall in $G$ must contain a base-path-vertex. Using Figure \ref{fig:5_wall} it is possible to pack at least nine vertex-disjoint $5$-walls into a $10 \times 10$-wall, so our subdivision of a $t \times t$ wall must contain at least $9\Big\lfloor\frac{t}{10}\Big\rfloor^2$ disjoint subdivisions of a $5$-wall.

Hence, allowing at least one base-path-vertex in each induced subdivision of a $5$-wall, $V(G)$ must contain at least  $9\Big\lfloor\frac{t}{10}\Big\rfloor^2$ base-path-vertices. But we know $V(G)$ contains at most $3\mathfrak{b} < 9\Big\lfloor\frac{t}{10}\Big\rfloor^2$ base-path-vertices, so we have a contradiction.  Thus, $\RRR^{\alpha}$ cannot contain a subdivision of a $t \times t$ wall when $\Big\lfloor\frac{t}{10}\Big\rfloor > \sqrt{\frac{\mathfrak{b}}{3}}$.

A similar argument can be applied to a line graph of a subdivision of a $t \times t$ wall noting that each triangle in the line graph contains a star-vertex.
\end{proof}

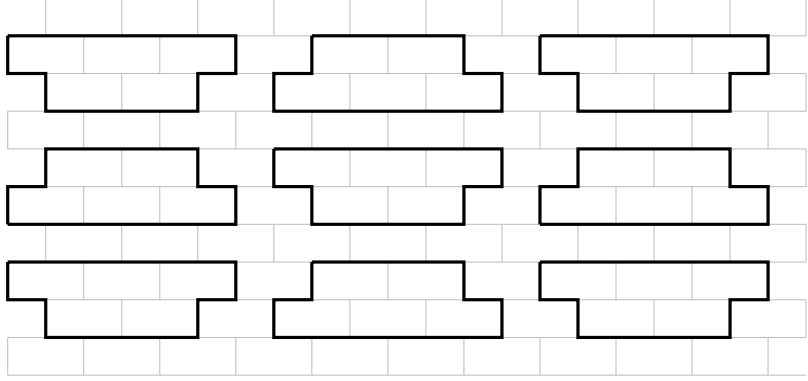
\begin{figure}\centering

\begin{tikzpicture}[scale=0.5,
	vertex2/.style={circle,draw,minimum size=6,fill=black},]

	\draw[lightgray] (1,0)--(22,0);
	\draw[lightgray] (1,1)--(22,1);
	\draw[lightgray] (1,2)--(22,2);
	\draw[lightgray] (1,3)--(22,3);
	\draw[lightgray] (1,4)--(22,4);
	\draw[lightgray] (1,5)--(22,5);
	\draw[lightgray] (1,6)--(22,6);.
	\draw[lightgray] (1,7)--(22,7);
	\draw[lightgray] (1,8)--(22,8);	
	\draw[lightgray] (1,9)--(22,9);
	\draw[lightgray] (1,10)--(22,10);

	\foreach \i in {1,3,5,7,9,11,13,15,17,19,21}
		\foreach \j in {0,2,4,6,8}
	\draw[lightgray] (\i,\j)--(\i,\j+1);
	
	\foreach \i in {2,4,6,8,10,12,14,16,18,20,22}
		\foreach \j in {1,3,5,7,9}
	\draw[lightgray] (\i,\j)--(\i,\j+1);

	\draw[very thick] (1,9)--(1,8)--(2,8)--(2,7)--(6,7)--(6,8)--(7,8)--(7,9)--(1,9);
	\draw[very thick] (9,9)--(9,8)--(8,8)--(8,7)--(14,7)--(14,8)--(13,8)--(13,9)--(9,9);
	\draw[very thick] (15,9)--(15,8)--(16,8)--(16,7)--(20,7)--(20,8)--(21,8)--(21,9)--(15,9);
	
	\draw[very thick] (1,4)--(1,5)--(2,5)--(2,6)--(6,6)--(6,5)--(7,5)--(7,4)--(1,4);
	\draw[very thick] (8,6)--(8,5)--(9,5)--(9,4)--(13,4)--(13,5)--(14,5)--(14,6)--(8,6);
	\draw[very thick] (15,4)--(15,5)--(16,5)--(16,6)--(20,6)--(20,5)--(21,5)--(21,4)--(15,4);

	\draw[very thick] (1,3)--(1,2)--(2,2)--(2,1)--(6,1)--(6,2)--(7,2)--(7,3)--(1,3);
	\draw[very thick] (9,3)--(9,2)--(8,2)--(8,1)--(14,1)--(14,2)--(13,2)--(13,3)--(9,3);
	\draw[very thick] (15,3)--(15,2)--(16,2)--(16,1)--(20,1)--(20,2)--(21,2)--(21,3)--(15,3);

\end{tikzpicture}
\caption{Nine $5$-walls in a $10 \times 10$ wall}
	\label{fig:5_wall}

\end{figure}

%
%
%
%
%
%
\subsection{A nested path-star graph with large tree-width contains a  large \texorpdfstring{$t$}{t{}}-sail}\label{Section:largetw}

A \emph{$k$-block} in a graph $G$ is a maximal set of at least $k$ vertices no two of which can be separated in $G$ by deleting fewer than $k$ vertices. A $k$-block can be thought of as a highly connected part of a graph and has been used in a number of ways. In particular, Wei{\ss}auer showed in  \cite{weissauer:blocks:} that for $k \ge 1$ every graph of tree-width at least $2k^2$ has a minor containing a $k$-block.

In \cite{abrishami:induced_subgraphs_VII:} a more restricted type of $k$-block was introduced. A \emph{strong $k$-block} in $G$ is a set $B$ of at least $k$ vertices such that for every $2$-subset $\{x,y\}$ of $B$, there exists a collection $\PPP_{x,y}$ of at least $k$ distinct and pairwise internally disjoint paths in $G$ from $x$ to $y$, where for every two distinct $2$-subsets $\{x,y\},\{x',y'\} \subseteq B$ and every choice of paths $P \in \PPP_{x,y}$ and $P' \in \PPP_{x',y'}$ we have $P \cap P'=\{x,y\} \cap \{x',y'\}$.

We show that all $t$-sails for large $t$ contain strong $k$-blocks for large $k$ and that in nested path-star graph classes, strong $k$-blocks for large $k$ only occur in graphs containing a $t$-sail for large $t$ as an induced subgraph . We use this to conclude that a nested path-star graph has large tree-width if and only if it contains a $t$-sail for large $t$ as an induced subgraph.

\begin{lemma}\label{block_1}
For any $t \ge 1$  a $t^3$-sail contains a strong $t$-block.
\end{lemma}
\begin{proof}
Let $B=\{s_1, \dots s_t\}$, i.e., the first $t$ star-vertices. We claim $B$ is a strong $t$-block.

For every $2$-subset $\{s_x,s_y\}$ of $B$, we define the set $\PPP_{x,y}$ of $t$ disjoint paths between $s_x$ and $s_y$ ($x<y$) being the $t$ class-path components numbered $\{(x-1)t^2+(y-1)t, (x-1)t^2+(y-1)t+1, \dots, (x-1)t^2+(y-1)t+t-1\}$.

For every two distinct $2$-subsets $\{x,y\},\{x',y'\} \subseteq B$ and every choice of paths $P \in \PPP_{x,y}$ and $P' \in \PPP_{x',y'}$ we have $P \cap P'=\{s_x,s_y\} \cap \{s_x',s_y'\}$. Hence $B$ is a strong $t$-block.   
\end{proof}

To show that in nested path-star graph classes a large strong $k$-block contains a large $t$-sail as an induced subgraph, we explore the structure of nested words further.

Let $\SSS_1$ be the infinite set of all letters appearing in the $\mathfrak{b}$-nested word $\alpha$. We write $\BBB_1$ for the base of $\alpha$. Define nested subwords $\alpha^{\SSS_i}$ for $i \ge 2$ by $\SSS_i= \SSS_{i-1} \setminus \BBB_{i-1}$ where $\BBB_i$ is the base of $\alpha^{\SSS_i}$. Note that each $\BBB_i$ contains at least one and at most $\mathfrak{b}$ letters.

For any positive integer $t$, define a \emph{$\BBB^t$-factor} as a sequence of $\alpha$ containing only letters from $\cup_{i=1}^{t} \BBB_i$, and at least one letter from each $\BBB_i$ for $1 \le i \le t$. Likewise, define a \emph{$\SSS^t$-factor} as a sequence of $\alpha$ containing only letters from a single branch of $\alpha^{\SSS_t}$.

\begin{obs}\label{obs_nest_1}
For any positive integer $t$, a nested word $\alpha$ alternates between $\BBB^t$-factors and $\SSS^t$-factors.
\end{obs}
\begin{proof}
In a branched word (see Definition \ref{def_branched}), one or more base letters must appear immediately before and immediately after a branch, so the statement is true for $t=1$.
Using induction, assume the statement is true for $t= n-1$ so that $\alpha$ alternates between $\BBB^{n-1}$-factors and $\SSS^{n-1}$-factors.

Suppose, in $\alpha$, we have a sequence $uvw$ where $u$ and $w$ are $\SSS^{n-1}$-factors and $v$ is a $\BBB^{n-1}$-factor. From Definition \ref{def_nested}, in $\alpha^{\SSS_n}$  each of $u$ and $w$ are reduced to a factor that contains at most one $\SSS^{n}$-factor (since from bullet point three of the definition it is not possible to get two branches in $\alpha^{\SSS_n}$ out of one branch in $\alpha^{\SSS_{n-1}}$) and $v$ completely disappears.

As $\alpha^{\SSS_n}$ is branched we cannot have two  adjacent $\SSS^n$-factors so there must be a letter(s) from $\BBB_n$ between them. Combining this letter(s) with the $\BBB^{n-1}$-factor $v$, gives us a  $\BBB^{n}$-factor between every pair of $\SSS^{n}$-factors, so the statement is true for $t= n$.

Therefore, from the induction hypothesis, the observation follows.
\end{proof}

\begin{obs}\label{obs_nest_2}
Let $\HHH = \{ h_1, h_2, \dots \}$ be a branch set of $\alpha^{\SSS_t}$ where $h_1<h_2< \dots   $. Then for any $x<y$, between any occurrence of $h_x$ and $h_y$ in $\alpha$ there is either a factor $h_1 \dots h_{x-1}$ or $h_{x+1} \dots h_{y-1}$.
\end{obs}
\begin{proof}
This follows from the fact that branches must start with the first letter in the branch set and must appear in branch order. 
\end{proof}

\begin{lemma}\label{block_2}
Let $G$ be a graph from a path-star class defined by a $\mathfrak{b}$-nested word $\alpha$ over the infinite alphabet $\AAA$.  If $G$ contains a strong $k$-block, where $k \ge \max \{t\mathfrak{b}^{t}+t\mathfrak{b},t(\mathfrak{b}+2)+2\}$ for some integer $t \ge 1$, then it also contains a $t$-sail as an induced subgraph.
\end{lemma}
\begin{proof}
Fix some embedding of $G$ into $R^{\alpha}$, and let $\SSS_1 \subseteq \AAA$ denote the letters whose star-vertices appear in this embedding.  For any two vertices in a strong $k$-block there must be $k$ internally disjoint paths between them.   The vertices in this strong $k$-block must be star-vertices since only star-vertices can have degree greater than three, so let $\LLL\subseteq \SSS_1$ be the letters corresponding to the vertices of the strong $k$-block in $G$ where $k \ge t\mathfrak{b}^{t}+t\mathfrak{b}$.

Let subword $\alpha^{\SSS_1}$ have base $\BBB_1$, and define subwords $\alpha^{\SSS_i}$ for $i \ge 2$ by $\SSS_i= \SSS_{i-1} \setminus \BBB_{i-1}$ with base $\BBB_i$. 

Observe that each star-vertex corresponding to a letter in $\SSS_1 \setminus \LLL$ appears in at most one of the internally disjoint paths between two vertices of the strong $k$-block as otherwise the paths would not be disjoint.

As $k \ge t(\mathfrak{b}+2)+2$, and there are at most $t\mathfrak{b}$ letters in $\cup_{i=1}^{t} \BBB_i$,  $\LLL$ contains at least $(2t+2)$ letters in $\SSS_{t+1}$. Therefore, either there exists a pair $x,y \in \LLL$ that are in different branch sets of $\alpha^{\SSS_t}$ (Case $1$) or there are at least $(2t+2)$ letters in the same branch set of $\alpha^{\SSS_t}$ (Case $2$).

Case 1: (There exists a pair $x,y \in \LLL$ in different branch sets of $\alpha^{\SSS_t}$.) From Observation \ref{obs_nest_1}, between every occurrence of $x$ and occurrence of $y$ in $\alpha$ there is a $\BBB^t$ factor.

At most one of the disjoint paths from $s_x$ to $s_y$ can pass through each star-vertex associated with a letter in $\cup_{i=1}^{t} \BBB_i$ (i.e., at most $t\mathfrak{b}$ paths). This leaves $k- t\mathfrak{b}$ paths that do not pass through such a star-vertex. The remaining disjoint paths must all include a set of consecutive class-path-vertices corresponding to a $\BBB^t$ factor in $\alpha$ (Observations \ref{obs_nest_1}).

Given a collection of  $\BBB^t$ factors of size $k- t\mathfrak{b}$, using the pigeonhole principle, as $k \ge t\mathfrak{b}^{t}+t\mathfrak{b}$, there are at least $\frac {k- t\mathfrak{b}}{\mathfrak{b}^{t}} \ge t$ of them that contain the same letter from each set $\BBB_i$, $1 \le i \le t$. Call this set of at least $t$ letters $\TTT \subset \cup_{i=1}^{t} \BBB_i$. It follows that at least $t$ of the disjoint paths from $s_x$ to $s_y$ contain a component of the class-path incorporating a path-vertex corresponding to each letter in $\TTT$ -- let us call these components $I_1, \dots I_t$.

Case 2: (There exists at least $(2t+2)$ letters in the same branch set of $\alpha^{\SSS_t}$.)  
Let the $(2t+2)$ letters come from branch $\HHH = \{ h_1, h_2, \dots \}$ where $h_1<h_2< \dots$. Since  there are at least $(2t+2)$ letters, we can choose the $(t+1)$-th and $(2t+2)$-th letters as $h_x$ and $h_y$ so that $t<x<x+t<y$.
Using Observation \ref{obs_nest_2}, between every occurrence of $h_x$ and occurrence of $h_y$ in $\alpha$ there is a set of $t$ consecutive letters from $\HHH$ (either the first $t$ letters in $\HHH$ or the (at least) $t$ letters between $h_x$ and $h_y$ in the $\HHH$ order).

Denoting $s_x$ and $s_y$ as the two star-vertices in the $k$-block corresponding to letters $h_x$ and $h_y$, at most one of the disjoint paths from $s_x$ to $s_y$ can pass through each star-vertex associated with one of these $2t$ letters in $\HHH$. This leaves $k- 2t$ disjoint paths that do not pass through such a star-vertex. The remaining disjoint paths must all include a set of consecutive class-path-vertices corresponding to one of our two sets of consecutive letters from $\HHH$ (Observation \ref{obs_nest_2}).

From the pigeonhole principle at least $\frac{k-2t}{2} \ge t$ of the disjoint paths must correspond to the same set of $t$ consecutive letters from $\HHH$. Call this set of letters $\TTT \subseteq \HHH$. It follows that at least $t$ of the disjoint paths from $s_x$ to $s_y$ contain a component of the class-path incorporating a path-vertex corresponding to each letter in $\TTT$ -- let us call these components $I_1, \dots I_t$.

In either Case 1 or Case 2,  we have path components in $G$, $I_1, \dots I_t$,  each containing path-vertices corresponding to each letter in $\TTT$. Given that $\TTT \subset \SSS_1$ the stars $S_{\TTT}$ corresponding to letters in $\TTT$ are all in V[G]. Therefore, the graph $G\Big[ \bigcup_{i=1}^t V[I_i] \cup S_{\TTT}\Big]$  contains a forest of $t$ paths and a forest of $t$ stars  sufficient to fulfil the definition that it contains a $t$-sail as an induced subgraph [see example in Figure \ref{fig:k_block}].
\end{proof}

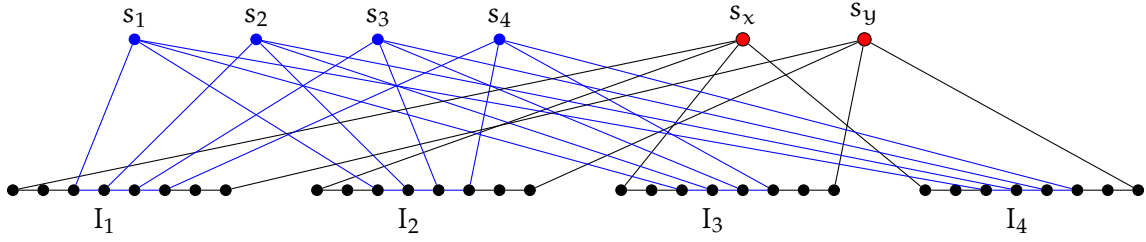
\begin{figure}\centering

\begin{tikzpicture}[scale=0.4,
	vertex2/.style={circle,draw,minimum size=5,fill=red},]
	
	\node (1) at(-10,0) {};\node (2) at(-9,0) {};\node (3) at (-8,0) {};
	\node (4) at(-7,0) {};\node (5) at(-6,0) {};\node (6) at (-5,0) {};
	\node (7) at(-4,0) {};\node (8) at(-3,0) {};
	\draw (1)--(2)--(3);\draw [blue](3)--(4)--(5)--(6);\draw(6)--(7)--(8);
	
	\node (11) at(0,0) {};\node (12) at(1,0) {};\node (13) at (2,0) {};
	\node (14) at(3,0) {};\node (15) at(4,0) {};\node (16) at (5,0) {};
	\node (17) at(6,0) {};\node (18) at(7,0) {};
	\draw (11)--(12)--(13);\draw [blue](13)--(14)--(15)--(16);\draw(16)--(17)--(18);
	
	\node (21) at(10,0) {};\node (22) at(11,0) {};\node (23) at (12,0) {};
	\node (24) at(13,0) {};\node (25) at(14,0) {};\node (26) at (15,0) {};
	\node (27) at(16,0) {};\node (28) at(17,0) {};
	\draw (21)--(22)--(23);\draw[blue](23)--(24)--(25)--(26);\draw(26)--(27)--(28);
	
	\node (31) at(20,0) {};\node (32) at(21,0) {};\node (33) at (22,0) {};
	\node (34) at(23,0) {};\node (35) at(24,0) {};\node (36) at (25,0) {};
	\node (37) at(26,0) {};\node (38) at(27,0) {};
	\draw (31)--(32)--(33);\draw [blue](33)--(34)--(35)--(36);\draw(36)--(37)--(38);
	
	\node (s1) [color=blue,label={above:$s_1$}] at (-6,5) {};
	\node (s2) [color=blue,label={above:$s_2$}] at (-2,5) {};
	\node (s3) [color=blue,label={above:$s_3$}] at (2,5) {};
	\node (s4) [color=blue,label={above:$s_4$}] at (6,5) {};
	
	\node (sx) [vertex2][label={above:$s_x$}] at (14,5) {};
	\node (sy) [vertex2][label={above:$s_y$}] at (18,5) {};

	\draw (sx) -- (11);\draw [blue](s1) -- (13);\draw [blue](s2) -- (14); 
	\draw [blue](s3) -- (15);\draw [blue](s4) -- (16);\draw(sy) -- (18); 
	
	\draw (sx) -- (21);\draw [blue](s1) -- (23);\draw [blue](s2) -- (24); 
	\draw [blue](s3) -- (25);\draw [blue](s4) -- (26);\draw(sy) -- (28); 
	
	\draw (sx) -- (31);\draw [blue](s1) -- (33);\draw [blue](s2) -- (34); 
	\draw [blue](s3) -- (35);\draw [blue](s4) -- (36);\draw(sy) -- (38); 
	
	\draw (sx) -- (1);\draw [blue](s1) -- (3);\draw [blue](s2) -- (4); 
	\draw [blue](s3) -- (5);\draw [blue](s4) -- (6);\draw(sy) -- (8); 
	
	\node (101)[draw=none,fill=none] at (-7,-1) {$I_1$};
	\node (102)[draw=none,fill=none] at (3,-1) {$I_2$};
	\node (103)[draw=none,fill=none] at (13,-1) {$I_3$};
	\node (104)[draw=none,fill=none] at (23,-1) {$I_4$};

\end{tikzpicture}

\caption{A $4$-sail (blue) with $s_x$ and $s_y$ two nodes in a $k$-block}
	\label{fig:k_block}

\end{figure}

Letting $\mathbb{B}_k$ be the class of all graphs with no strong $k$-block and remembering that the \emph{$k$-basic obstructions} are $(1)$ the complete graph $K_k$, $(2)$ the complete bipartite graph $K_{k,k}$, $(3)$ a subdivision of the $(k \times k)$-wall and $(4)$ a line graph of a subdivision of the $(k \times k)$-wall, we use the following result:

\begin{thm} [\cite{abrishami:induced_subgraphs_VII:}]\label{block_3}
For every integer $k \ge 1$ there exists a positive integer $w(k)$ such that every graph in $\mathbb{B}_k$ with tree-width more than $w(k)$ contains an induced subgraph isomorphic to one of the $k$-basic obstructions.
\end{thm}

\begin{thm}\label{thm_sail}
If $\alpha$ is a nested word then for every $t \ge 1$ there is a positive integer valued function $f_{\alpha}(t)$ such that  every graph in $\RRR^{\alpha}$ of tree-width at least $f_{\alpha}(t)$ contains a $t$-sail as an induced subgraph.   
\end{thm}
\begin{proof}
Let $k = \max \{t\mathfrak{b}^{t}+t\mathfrak{b},t(\mathfrak{b}+2)+2\}$ and $f_{\alpha}(t)=w(k)$ as defined by Theorem \ref{block_3}. Suppose for graph $G \in \RRR^{\alpha}$, we have $\tw(G) \ge f_{\alpha}(t)$. Then by Theorem \ref{block_3}, $G$ cannot be in $\mathbb{B}_k$ because by Theorem \ref{KKW_free} $\RRR^{\alpha}$ is $KKW$-free, $G$ does not contain a $k$-basic obstruction, and therefore, $G$ contains a strong $k$-block. It follows by Lemma \ref{block_2} that $G$ contains an induced subgraph isomorphic to a $t$-sail. 
\end{proof} 

%
%
%
%
%
%
\subsection{Nested path-star classes are infinitely defined}\label{Sect:infinit_defn}

As previously mentioned, the list of minimal forbidden induced subgraphs in a hereditary class may be finite or infinite. The following result characterizes hereditary classes of unbounded tree-width that are finitely defined:
 
\begin{thm}[\cite{lozin:tw_dichotomy:}]\label{thm_dichotomy}
The tree-width of graphs in a hereditary class defined by a finite set $\FFF$  of forbidden induced subgraphs is bounded if and only if $\FFF$ includes a complete graph, a complete bipartite graph, a tripod (a forest in which every connected component has at most $3$ leaves) and the line graph of a tripod.
\end{thm}

A full classification of hereditary classes into finitely or infinitely defined is a long way from being established. However, we can show that one consequence of Theorem \ref{thm_dichotomy} is the following:

\begin{thm} \label{thm_infinite_def_1}
A hereditary class of graphs of unbounded tree-width that is $KKW$-free  is infinitely defined.
\end{thm}
\begin{proof}
Let $\CCC$  be a hereditary class of graphs of unbounded tree-width that is $KKW$-free, so that it excludes a subdivision of a $t \times t$-wall  and line graph of a subdivision of a $t \times t$-wall  for some $t \in \mathbb{N}$. 

For a contradiction suppose $\CCC$ is finitely defined with minimal forbidden induced subgraphs $\FFF=\{F_1, \dots, F_n\}$ for some $n \in \mathbb{N}$. As $\CCC$ has unbounded tree-width, then from Theorem \ref{thm_dichotomy} $\FFF$ either does not contain a tripod or does not contain the line graph of a tripod.
Suppose it does not contain a tripod (i.e. all tripods are in $\CCC$).

Let $d$ denote the maximum distance between any two vertices of degree three or more for any tree (other than a tripod) in $\FFF$, and let $m$ denote the maximum length of any induced cycle in any graph in $\FFF$.

Let $W$ be a subdivision of a $t \times t$-wall with more than $\max(d,m)$ degree two vertices on the paths between degree three vertices. This contains none of the minimal forbidden induced subgraphs in $\FFF$ since every induced cycle of $W$ has length more than $m$ and every induced tree of $W$ has distance greater than $d$ between degree three vertices, hence, $W \in \CCC$.  A contradiction of the assumption that $\CCC$ excludes a subdivision of a $t \times t$-wall.

A similar argument applies if $\FFF$ does not contain the line graph of a tripod, thus $\CCC$ must be infinitely defined.
\end{proof}
\begin{cor}
A path-star class defined by a nested word is infinitely defined. 
\end{cor}
\begin{proof}
From Theorem \ref{KKW_free} all path-star classes defined by a nested word are $KKW$-free  so from Theorem \ref{thm_infinite_def_1} are infinitely defined.
\end{proof}

%
%
%
%
%
%
\section{Minimal sparse hereditary classes of unbounded tree-width}\label{Sect_min}

%
%
%
%
%
%
\subsection{Hereditary graph classes of bounded vertex degree or with an excluded minor do not contain a minimal subclass} \label{Sect:Wall}

The structure of a wall allows us to delete vertices and leave the fundamental structure intact, ignoring subdivisions, and this quality is used in the following:

\begin{lemma}\label{lem_subwall}
A subdivision (or line graph of a subdivision) of a $W_{kt \times kt}$ wall  for $k,t \ge 1$ contains an induced subgraph isomorphic to a subdivision (or line graph of a subdivision) of a $W_{t \times t}$ wall  that does not contain a cycle smaller than $C_{8k-2}$ (other than $C_3$ in the case of the line graph). 
\end{lemma}
\begin{proof} 
Let $G=(V,E)$ be a subdivision of a $W_{kt \times kt}$ wall. Let $V^3 \subseteq V$ be the set of degree three vertices in $G$, together with the equivalent degree two vertices from the perimeter 'bricks' (or holes) that would be degree three if the wall was extended, so that every brick in $G$  contains six vertices in $V^3$. An induced subgraph $G^{\prime}$ isomorphic to  a subdivision of a $W_{t \times t}$ wall can be constructed by overlaying a lattice of $k \times k$ sub-walls, the new 'bricks', and deleting all vertices of $G$ internal to every new brick, as shown in the example in Figure \ref{fig:wall} where $k=2$ and $t=4$. Each new brick contains $8k-2$ vertices from $V^3$, and thus $G^{\prime}$ does not contain a cycle smaller than $C_{8k-2}$.

An identical argument works if $G$ is the line graph of a subdivision of  a $W_{kt \times kt}$ wall.
\end{proof}

\begin{thm} \label{min_w}
If $\CCC$ is a hereditary class of graphs of bounded vertex degree or that has an excluded minor then it does not contain a minimal class.
\end{thm}
\begin{proof}
If $\DDD$ is a minimal hereditary subclass of $\CCC$ then  by Theorems \ref{thm_minor_free} or \ref{thm_degree_bounded}, as it has unbounded tree-width, it contains (as a member of the class) a graph $G$ which is isomorphic to a subdivision (or line graph of a subdivision) of a $W_{kt \times kt}$ wall for arbitrarily large $k$ and $t$.

Suppose $C_m$ ($m>3$) is the shortest cycle in $G$. Set $k > \frac{m+2}{8}$.  Then from Lemma \ref{lem_subwall} $G$ contains as an induced subgraph a subdivision (or line graph of a subdivision) of a $W_{t \times t}$ wall that does not contain a cycle smaller than $C_{8k-2}$ which is longer than $C_m$ (other than $C_3$).

But now the proper hereditary subclass $\DDD \cap Free(C_m)$ contains a subdivision of $W_{t \times t}$ for arbitrarily large $t$, so 
$\DDD \cap Free(C_m)$ also has unbounded tree-width, which  contradicts $\DDD$ being minimal.
\end{proof}

\begin{figure}\centering

\begin{tikzpicture}[scale=0.5,
	vertex2/.style={circle,draw,minimum size=6,fill=black},]

	\draw[lightgray] (2,8)--(3,8)--(4,8)--(5,8);
	\draw[lightgray] (3,9)--(3,8)--(4,8)--(4,7); 
	\draw[lightgray] (6,8)--(7,8)--(8,8)--(9,8);
	\draw[lightgray] (7,9)--(7,8)--(8,8)--(8,7);
	\draw[lightgray] (10,8)--(11,8)--(12,8)--(13,8);
	\draw[lightgray] (11,9)--(11,8)--(12,8)--(12,7);
	\draw[lightgray] (14,8)--(15,8)--(16,8)--(17,8);
	\draw[lightgray] (15,9)--(15,8)--(16,8)--(16,7);
	
	\draw[lightgray] (2,6)--(3,6)--(4,6)--(5,6);
	\draw[lightgray] (3,7)--(3,6)--(4,6)--(4,5); 
	\draw[lightgray] (6,6)--(7,6)--(8,6)--(9,6);
	\draw[lightgray] (7,7)--(7,6)--(8,6)--(8,5);
	\draw[lightgray] (10,6)--(11,6)--(12,6)--(13,6);
	\draw[lightgray] (11,7)--(11,6)--(12,6)--(12,5);
	\draw[lightgray] (14,6)--(15,6)--(16,6)--(17,6);
	\draw[lightgray] (15,7)--(15,6)--(16,6)--(16,5);
	
	\draw[lightgray] (2,4)--(3,4)--(4,4)--(5,4);
	\draw[lightgray] (3,5)--(3,4)--(4,4)--(4,3); 
	\draw[lightgray] (6,4)--(7,4)--(8,4)--(9,4);
	\draw[lightgray] (7,5)--(7,4)--(8,4)--(8,3);
	\draw[lightgray] (10,4)--(11,4)--(12,4)--(13,4);
	\draw[lightgray] (11,5)--(11,4)--(12,4)--(12,3);
	\draw[lightgray] (14,4)--(15,4)--(16,4)--(17,4);
	\draw[lightgray] (15,5)--(15,4)--(16,4)--(16,3);
	
	\draw[lightgray] (2,2)--(3,2)--(4,2)--(5,2);
	\draw[lightgray] (3,3)--(3,2)--(4,2)--(4,1); 
	\draw[lightgray] (6,2)--(7,2)--(8,2)--(9,2);
	\draw[lightgray] (7,3)--(7,2)--(8,2)--(8,1);
	\draw[lightgray] (10,2)--(11,2)--(12,2)--(13,2);
	\draw[lightgray] (11,3)--(11,2)--(12,2)--(12,1);
	\draw[lightgray] (14,2)--(15,2)--(16,2)--(17,2);
	\draw[lightgray] (15,3)--(15,2)--(16,2)--(16,1);

	\draw[very thick] (1,9)--(1,8)--(2,8)--(2,7)--(3,7)--
	(4,7)--(5,7)--(6,7)--(6,8)--(5,8)--(5,9)--(4,9)--
	(3,9)--(2,9)--(1,9);
	\draw[very thick] (5,9)--(5,8)--(6,8)--(6,7)--(7,7)--
	(8,7)--(9,7)--(10,7)--(10,8)--(9,8)--(9,9)--(8,9)--
	(7,9)--(6,9)--(5,9);
	\draw[very thick] (9,9)--(9,8)--(10,8)--(10,7)--
	(11,7)--(12,7)--(13,7)--(14,7)--(14,8)--(13,8)--
	(13,9)--(12,9)--(11,9)--(10,9)--(9,9);
	\draw[very thick] (13,9)--(13,8)--(14,8)--(14,7)--
	(15,7)--(16,7)--(17,7)--(18,7)--(18,8)--(17,8)--
	(17,9)--(16,9)--(15,9)--(14,9)--(13,9);
	
	\draw[very thick] (1,7)--(1,6)--(2,6)--(2,5)--(3,5)--
	(4,5)--(5,5)--(6,5)--(6,6)--(5,6)--(5,7)--(4,7)--
	(3,7)--(2,7)--(1,7);
	\draw[very thick] (5,7)--(5,6)--(6,6)--(6,5)--(7,5)--
	(8,5)--(9,5)--(10,5)--(10,6)--(9,6)--(9,7)--(8,7)--
	(7,7)--(6,7)--(5,7);
	\draw[very thick] (9,7)--(9,6)--(10,6)--(10,5)--
	(11,5)--(12,5)--(13,5)--(14,5)--(14,6)--(13,6)--
	(13,7)--(12,7)--(11,7)--(10,7)--(9,7);
	\draw[very thick] (13,7)--(13,6)--(14,6)--(14,5)--
	(15,5)--(16,5)--(17,5)--(18,5)--(18,6)--(17,6)--
	(17,7)--(16,7)--(15,7)--(14,7)--(13,7);
	
	\draw[very thick] (1,5)--(1,4)--(2,4)--(2,3)--(3,3)--
	(4,3)--(5,3)--(6,3)--(6,4)--(5,4)--(5,5)--(4,5)--
	(3,5)--(2,5)--(1,5);
	\draw[very thick] (5,5)--(5,4)--(6,4)--(6,3)--(7,3)--
	(8,3)--(9,3)--(10,3)--(10,4)--(9,4)--(9,5)--(8,5)--
	(7,5)--(6,5)--(5,5);
	\draw[very thick] (9,5)--(9,4)--(10,4)--(10,3)--
	(11,3)--(12,3)--(13,3)--(14,3)--(14,4)--(13,4)--
	(13,5)--(12,5)--(11,5)--(10,5)--(9,5);
	\draw[very thick] (13,5)--(13,4)--(14,4)--(14,3)--
	(15,3)--(16,3)--(17,3)--(18,3)--(18,4)--(17,4)--
	(17,5)--(16,5)--(15,5)--(14,5)--(13,5);
	
	\draw[very thick] (1,3)--(1,2)--(2,2)--(2,1)--(3,1)--
	(4,1)--(5,1)--(6,1)--(6,2)--(5,2)--(5,3)--(4,3)--
	(3,3)--(2,3)--(1,3);
	\draw[very thick] (5,3)--(5,2)--(6,2)--(6,1)--(7,1)--
	(8,1)--(9,1)--(10,1)--(10,2)--(9,2)--(9,3)--(8,3)--
	(7,3)--(6,3)--(5,3);
	\draw[very thick] (9,3)--(9,2)--(10,2)--(10,1)--
	(11,1)--(12,1)--(13,1)--(14,1)--(14,2)--(13,2)--
	(13,3)--(12,3)--(11,3)--(10,3)--(9,3);
	\draw[very thick] (13,3)--(13,2)--(14,2)--(14,1)--
	(15,1)--(16,1)--(17,1)--(18,1)--(18,2)--(17,2)--
	(17,3)--(16,3)--(15,3)--(14,3)--(13,3);
	
	\mnnodearray{18}{9}
	
	\foreach \x/\y in 
	{3/8,4/8,7/8,8/8,11/8,12/8,15/8,16/8,
	3/6,4/6,7/6,8/6,11/6,12/6,15/6,16/6,
	3/4,4/4,7/4,8/4,11/4,12/4,15/4,16/4,
	3/2,4/2,7/2,8/2,11/2,12/2,15/2,16/2}
		\node[vertex2,green!70!black] (\x-\y) at (\x,\y) {};

\end{tikzpicture}
\caption{An $8 \times 8$ wall containing a subdivision of a $4 \times 4$ wall after large (green) vertex deletion}
	\label{fig:wall}

\end{figure}
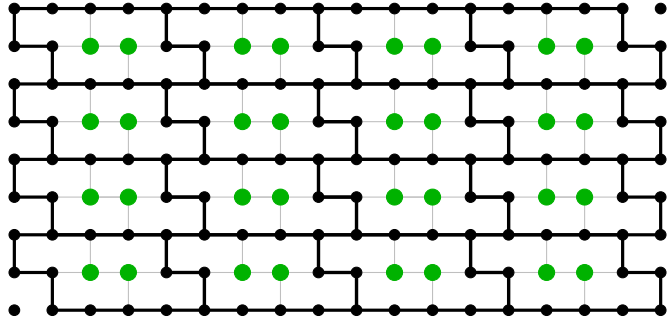

%
%
%
%
%
%
\subsection{Nested path-star hereditary graph classes do not contain a minimal subclass}

We will show that no path-star hereditary graph class defined by a  nested word contains a minimal subclass. 

\begin{lemma} \label{min_sail}
Let $\alpha$ be a $\mathfrak{b}$-nested word over an infinite alphabet $\AAA$. Then for integers $t \ge 2$, $m > 3$ and $T \ge 2t+m\mathfrak{b}$, a $T$-sail in $\RRR^{\alpha}$ with smallest cycle $C_m$ contains an induced subgraph isomorphic to a $t$-sail  which does not contain  $C_m$.
\end{lemma}
\begin{proof}
Let $G$ be a $T$-sail in $\RRR^{\alpha}$. Fix some embedding of $G$ into $R^{\alpha}$, and let $\SSS_1 \subseteq \AAA$ denote the letters whose star-vertices appear in this embedding.  

Let subword $\alpha^{\SSS_1}$ have base $\BBB_1$, and define nested subwords $\alpha^{\SSS_i}$ for $2 \le i \le m$ by $\SSS_i= \SSS_{i-1} \setminus \BBB_{i-1}$ with base $\BBB_i$ (Note $|\BBB_i| \le \mathfrak{b}$ for all $i= 1,2, \dots$).

Let $G^{\prime}$ be the subgraph of $G$ induced by the vertices of $G$ excluding the star-vertices corresponding to the letters of $\cup_{i=1}^m \BBB_i$ (at most $m\mathfrak{b}$) and excluding the star-vertices corresponding to alternate letters in the branch sets of $\SSS_m$ (i.e., half the remaining star-vertices).

We claim $G^{\prime}$ contains an induced subgraph isomorphic to a $t$-sail  which does not contain  $C_m$.

That $G^{\prime}$ contains an induced subgraph isomorphic to a $t$-sail follows from the fact that it contains at least $\frac{T-m\mathfrak{b}}{2} \ge t$ of the star-vertices of $G$ and all the path-vertices from the path components of $G$.

Suppose that $G^{\prime}$ contains an $m$-cycle with only one star-vertex, say $s_x$. Then the path-vertices adjacent to $s_x$ in the cycle must both correspond to a branch letter $x$ of $\SSS_m$. The rest of the cycle must consist of path-vertices corresponding to a factor $x \dots x$ of $\alpha$. Using Observations \ref{obs_nest_1} and \ref{obs_nest_2} there must be at least one base path-vertex from each of the $m$ base sets $\BBB_i$ in the cycle so it has more than $m$ vertices, a  contradiction.

Suppose that $G^{\prime}$ contains an $m$-cycle with three (or more)  star-vertices, $s_x$, $s_y$ and $s_z$. This must contain path components corresponding to $\alpha$ factors of the form $x \dots y$, $x\dots z$ and $y \dots z$ (or their reverse). These factors must be contained in branches of $\alpha^{\SSS_m}$, and hence $x$, $y$ and $z$ must be from the same branch set, otherwise the cycle would have more than $m$ vertices for the same reason as for the one star-vertex case.
But now, assuming without loss of generality that their branch order is $x<y<z$, then there must be a $y$ in the $x\dots z$ factor and a shorter cycle exists, which contradicts the fact that the shortest cycle in $G$ is of length $m$.

 Lastly, suppose that $G^{\prime}$ contains an $m$-cycle with precisely two  star-vertices, $s_x$ and $s_z$. This must contain path components corresponding to two $\alpha$ factors of the form $x \dots z$ (or the reverse). Hence, $x$ and $z$ are from the same branch set. In fact they must be consecutive letters from the same branch set, since if there was another letter $y$ between them in the branch order then there would be a shorter cycle than $C_m$, either containing the two stars $s_x$ and $s_y$ or the two stars $s_y$ and $s_z$. But the construction of $G^{\prime}$ requires the removal of star-vertices corresponding to alternate letters in the branch sets of $\SSS_m$, so $x$ and $z$ cannot be consecutive branch letters, a contradiction. Hence $G^{\prime}$ does not contain  $C_m$.
\end{proof}

\begin{thm} \label{min_nested_2}
If $\RRR^{\alpha}$ is a path-star hereditary class of graphs defined by a nested word $\alpha$ then it does not contain a minimal class.
\end{thm}
\begin{proof}
If $\DDD$ is a minimal subclass of $\RRR^{\alpha}$ then  by Theorem \ref{thm_sail}  for every positive integer $T$, $\DDD$ contains a $T$-sail.

Suppose the shortest cycle in $\DDD$ is $C_m$ ($m > 3$). Then from Lemma \ref{min_sail}  for any positive integer $t$ there exists a positive integer $T$ such that any $T$-sail in $\DDD$ contains an induced subgraph isomorphic to  a $t$-sail which does not contain  a $C_m$ cycle. Thus the subclass $\DDD \cap Free(C_m)$ still contains  a $t$-sail for arbitrarily large $t$ and has unbounded tree-width, which  contradicts $\DDD$ being minimal.
\end{proof}

%
%
%
%
%
%
\section{Concluding remarks} \label{Sect:Conclude}

This paper is, as far as we know, the first time an attempt has been made to use combinatorics on words in the study of treewidth. We believe the results are sufficient enough to justify further use of this technique. Likewise, path-star hereditary graph classes seem to be significant in respect of the study of tree-width and clique-width in sparse graph classes and warrant a more thorough study.

We have shown that path-star graph classes defined by nested words block large walls and large line graphs of walls. However, we have not resolved whether there are other such words, or whether, if we forbid a large wall and a large line graph of a wall in a path-star graph class $\RRR^{\alpha}$, then $\alpha$ contains a large nested subword.

Although we have added a new object, a $t$-sail, the identification of a full list of boundary objects that are obstructions to bounded tree-width in hereditary graph classes is still some way from being achieved. Our approach has been to consider graphs of bounded arboricity, in particular, those graphs of arboricity two constructed using forests of paths and stars. We believe this approach could be a fruitful way to identify further boundary objects.
%
%
%
%
%
%

\paragraph{Acknowledgements} 
The author would like to thank Robert Brignall for valuable discussions relating to this work and providing useful feedback which helped to improve the presentation, and is also grateful to the anonymous referees whose careful review of an earlier draft led to several significant improvements.

 ------------------------------------------------------------------------------------------------
\bibliographystyle{plain}
\bibliography{refs}

\end{document}